\documentclass[11pt,A4paper]{article}

\usepackage[left=25mm,right=25mm,top=25mm,bottom=30mm]{geometry}
\usepackage{epsfig}
\usepackage{epstopdf}
\usepackage{float}
\usepackage{thm-restate}
\usepackage{xfrac}

\usepackage[percent]{overpic}

\usepackage{relsize}
\usepackage{xcolor}
\usepackage{color}
\usepackage{amsthm,amsmath,amssymb}
\usepackage{booktabs}
\usepackage{mathpazo}
\usepackage{microtype}
\usepackage[T1]{fontenc}
\usepackage{bm}
\usepackage{sectsty}
\usepackage[
	pdftitle={PDFTitle},
	pdfauthor={Arnaud de Mesmay, Niloufar Fuladi, Hugo Parlier},
	ocgcolorlinks,
	linkcolor=linkred,
	citecolor=linkred,
	urlcolor=linkblue]
{hyperref}
\usepackage{cleveref}
\crefname{lemma}{Lemma}{Lemmas}

\usepackage{mathtools}

\definecolor{linkred}{RGB}{199,21,133}

\definecolor{linkblue}{RGB}{75,0,130}

\usepackage[hang,flushmargin]{footmisc}
\usepackage{enumitem}
\usepackage{titlesec}
	\titlespacing{\section}{0pt}{12pt}{0pt}
	\titlespacing{\subsection}{0pt}{6pt}{0pt}

\titlelabel{\thetitle.\quad}

\makeatletter 

\long\def\@footnotetext#1{%
\H@@footnotetext{%
\ifHy@nesting 
\hyper@@anchor{\@currentHref}{#1}%
\else 
\Hy@raisedlink{\hyper@@anchor{\@currentHref}{\relax}}#1%
\fi 
}}

\def\@footnotemark{%
\leavevmode 
\ifhmode\edef\@x@sf{\the\spacefactor}\nobreak\fi 
\H@refstepcounter{Hfootnote}%
\hyper@makecurrent{Hfootnote}%
\hyper@linkstart{link}{\@currentHref}%
\@makefnmark 
\hyper@linkend 
\ifhmode\spacefactor\@x@sf\fi 
\relax 
}%

\ifFN@multiplefootnote%
\renewcommand*\@footnotemark{%
\leavevmode 
\ifhmode 
\edef\@x@sf{\the\spacefactor}%
\FN@mf@check 
\nobreak 
\fi 
\H@refstepcounter{Hfootnote}%
\hyper@makecurrent{Hfootnote}%
\hyper@linkstart{link}{\@currentHref}%
\@makefnmark 
\hyper@linkend 
\ifFN@pp@towrite 
\FN@pp@writetemp 
\FN@pp@towritefalse 
\fi 
\FN@mf@prepare 
\ifhmode\spacefactor\@x@sf\fi 
\relax%
}%
\fi 

\makeatother 

\theoremstyle{plain}
\newtheorem{theorem}{Theorem}[section]

\newtheorem{lemma}[theorem]{Lemma}

\makeatletter
\newtheorem*{rep@theorem}{\rep@title}
\newcommand{\newreptheorem}[2]{%
\newenvironment{rep#1}[1]{%
 \def\rep@title{#2 \ref{##1}}%
 \begin{rep@theorem}}%
 {\end{rep@theorem}}}
\makeatother

\newreptheorem{theorem}{Theorem}
\newreptheorem{corollary}{Corollary}

\theoremstyle{definition}

\newtheorem{question}[theorem]{Question}

\newtheorem{remark}[theorem]{Remark}

\newcommand{\G}{{\mathcal G}}
\newcommand{\T}{{\mathcal T}}

\newcommand{\Sum}{\mathlarger{\mathlarger{ \sum}}}

\linespread{1.2}

\sectionfont{\large \bfseries}
\subsectionfont{\normalsize}

\setlength{\parindent}{0pt}
\setlength{\parskip}{6pt}

\long\def\symbolfootnote[#1]#2{\begingroup%
\def\thefootnote{\fnsymbol{footnote}}\footnote[#1]{#2}\endgroup}

\def\blfootnote{\xdef\@thefnmark{}\@footnotetext}

 \def\DEBUG{truetrue}
\ifdefined\DEBUG

\begin{document}

{\Large \bfseries 
Universal families of arcs and curves on surfaces}

{\large Niloufar Fuladi, Arnaud de Mesmay and Hugo Parlier\symbolfootnote[1]{All authors were partially supported by grant ANR-17-CE40-0033 of the French National Research Agency ANR (project SoS) and INTER/ANR/16/11554412/SoS of the Luxembourg National Research fund FNR: https://SoS.loria.fr/\\
{\em 2020 Mathematics Subject Classification:} Primary: 57K20 Secondary: 32G15, 57M15\\
{\em Key words and phrases:} curves and arcs on surfaces, triangulations, pants decompositions, mapping class groups}

{\bf Abstract.} 
The main goal of this paper is to investigate the minimal size of families of curves on surfaces with the following property: a family of simple closed curves $\Gamma$ on a surface \emph{realizes all types of pants decompositions} if for any pants decomposition of the surface, there exists a homeomorphism sending it to a subset of the curves in $\Gamma$. The study of such universal families of curves is motivated by questions on graph embeddings, joint crossing numbers and finding an elusive center of moduli space. In the case of surfaces without punctures, we provide an exponential upper bound and a superlinear lower bound on the minimal size of a family of curves that realizes all types of  pants decompositions. We also provide upper and lower bounds in the case of surfaces with punctures which we can consider labelled or unlabelled, and investigate a similar concept of universality for triangulations of polygons, where we provide bounds which are tight up to logarithmic factors.

\section{Introduction}

The study of simple curves and arcs on surfaces has played an ubiquitous role in geometric topology and combinatorial geometry, bringing together topics such as the study of mapping class groups, Teichm\"uller spaces and graph drawings. In particular, they have played an important role in understanding combinatorial models for moduli-type spaces such as curve and arc graphs and their related cousins of pants graphs and flip-graphs for triangulations. In these contexts, curves and arcs are considered up to isotopy and hence self-homeomorphisms of surfaces act nicely on the space of isotopy classes. While, in general, the space of isotopy classes of curves or arcs is infinite, up to homeomorphism there are only finitely many topological types. More generally, the same phenomenon holds for families of multicurves - that is, collections of disjoint curves such as the set of all pants decompositions of a given surface. In this paper, we try and find sets of arcs and curves that allow one to construct all types of multicurves in a given family with minimal cardinality. 

In this context, curves and arcs are assumed to be simple. Our surfaces will be topological, orientable and of finite type, and hence are determined by their genus $g$ and number of punctures $n$. While they are allowed to be $0$, we require that the Euler characteristic ($=2-2g-n$) be negative. All such surfaces can be built by pasting pairs of pants (a surface homeomorphic to a sphere minus 3 points) and so pants play a fundamental role as building blocks in the study of surfaces, including for the study of geometric structures and related moduli spaces. Equivalently, given a finite type surface, by cutting along a sufficient number of disjoint curves (a pants decomposition), one obtains a collection of pants. For this reason, pants decompositions are among the most well-studied multicurves.

The homeomorphism type of a pants decomposition is entirely determined by the trivalent graph encoding the adjacencies of the different pants that it is made of. For example, there are two types of pants decompositions in genus $2$, which correspond to the two trivalent graph on two vertices (see Figure~\ref{F:exampleintro}). One of the main objects of study in this paper are families of curves which realize all topological types of pants decompositions. A set of curves $\Gamma$ is said to be a universal family (for pants decompositions) if for any pants decomposition of the surface, there exists a homeomorphism sending it to a subset of the curves in $\Gamma$. For example, in the genus $2$ case, there is a universal family of size $4$, pictured in Figure~\ref{F:exampleintro}.

\begin{figure}
\centering
\includegraphics[width=\textwidth-2cm]{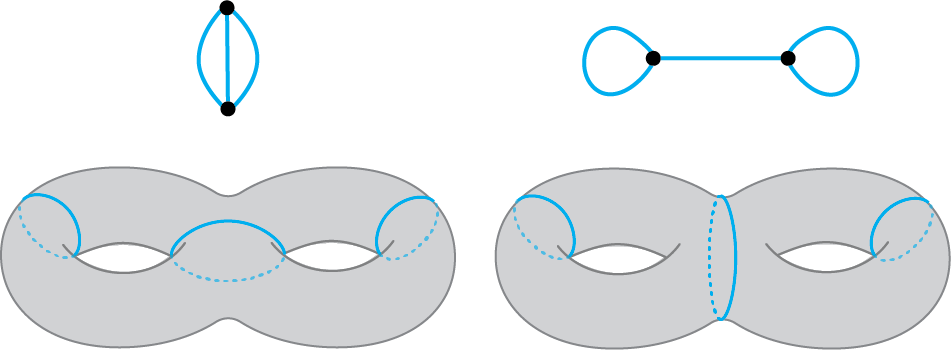}
\caption{The two homeomorphism classes of pants decomposition on a closed genus two surface, and their associated intersection graphs. The family of four curves drawn in the two pictures are enough to build both pants decompositions.}
\label{F:exampleintro}
\end{figure}

For surfaces of genus $g$, there are $g^{\Theta(g)}$ homeomorphism classes of pants decompositions~\cite{bollobas}, and thus, by taking an arbitrary pants decomposition in each homeomorphism class and the $3g-3$ curves it is made of, there is a trivial upper bound of $g^{O(g)}$ on the minimal size of a family of curves that realizes all types of pants decompositions. Our first result is to improve on this trivial bound to bring it to a singly-exponential dependency. 

\begin{theorem}\label{maintheorem}
Let $\Sigma$ be a closed orientable surface of genus $g$, and $\Gamma$ be a minimal size universal family for pants decompositions. Then

\[|\Gamma|\leq 3^{2g-1}\textrm{ and } |\Gamma|=\Omega(g^{4/3-\varepsilon})\]
for any $\varepsilon>0$.
\end{theorem}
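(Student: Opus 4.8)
The plan is to prove the two bounds separately. For the upper bound $|\Gamma|\le 3^{2g-1}$, the natural strategy is to build a single universal family by induction on the genus, exploiting the fact that every pants decomposition contains a curve whose complement is simpler. First I would fix a reference pants decomposition $P_0$ of $\Sigma$ together with its $3g-3$ curves, and then argue that it suffices to realize, up to homeomorphism, the finitely many trivalent graphs on $2g-2$ vertices; the key combinatorial input is that any trivalent graph on $2g-2$ vertices either is disconnected by removing one edge (a separating curve), or contains a non-separating loop/handle. In the separating case the graph splits as a join of two smaller trivalent graphs along a single curve, and in the non-separating case one reduces the genus by one. Carrying this recursion carefully, with a fixed ``spine'' curve shared between the two sides of every separation, one gets a recurrence of the form $f(g)\le 3\cdot f(g-1)$ or $f(g)\le \max_{g_1+g_2=g} f(g_1)+f(g_2)+O(1)$, which solves to $f(g)\le 3^{2g-1}$. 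The delicate point is bookkeeping: one must ensure that the homeomorphisms realizing the two ``halves'' of a separating decomposition can be chosen to agree on the shared curve, so that they glue to a global homeomorphism; this is where the change-of-coordinates principle for surfaces (any two separating curves of the same topological type are related by a homeomorphism) will be invoked, and it is the main obstacle in making the induction rigorous.

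For the lower bound $|\Gamma|=\Omega(g^{4/3-\varepsilon})$, the approach is a counting/incidence argument. Suppose $\Gamma$ is universal with $|\Gamma|=m$. Each pants decomposition realized inside $\Gamma$ uses a subset of $3g-3$ pairwise disjoint curves of $\Gamma$. I would first bound how many homeomorphism types of pants decompositions can be realized using a fixed $(3g-3)$-subset $S\subseteq\Gamma$: two homeomorphisms sending pants decompositions into the same set $S$ differ by an element of the stabilizer of $S$ in the mapping class group, and — crucially — the number of distinct multicurve-types one can obtain from a fixed set of $3g-3$ curves on $\Sigma$ is controlled by the number of ways those $3g-3$ curves can themselves be arranged, which one can bound by something like $2^{O(g\log g)}$ divided by nothing helpful, so a cruder pigeonhole is needed. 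The sharper idea is to count incidences between curves of $\Gamma$ and the pants regions: any pants decomposition cuts $\Sigma$ into $2g-2$ pants, each bounded by three curves of $\Gamma$, so $\Gamma$ must contain enough curves to ``triangulate'' (in the pants sense) all $g^{\Theta(g)}$ trivalent graphs, and a fixed curve $\gamma\in\Gamma$ can participate in only boundedly-controlled-many such configurations because its complement $\Sigma\setminus\gamma$ is a fixed surface. Combining $g^{\Theta(g)}$ types, the fact that each uses $3g-3$ curves from $\Gamma$, and a bound on how many types share a common curve, yields $m^{3g-3}\cdot(\text{reuse factor})\ge g^{\Theta(g)}$; optimizing the exponents — and this is where the non-obvious exponent $4/3$ comes from, presumably via a more refined hypergraph Turán-type or Kővári–Sós–Turán-type estimate on the bipartite incidence structure between $\Gamma$ and the realized decompositions — gives $m=\Omega(g^{4/3-\varepsilon})$.

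I expect the main obstacle to be the lower bound rather than the upper bound: the upper bound is a relatively clean, if fiddly, induction, whereas extracting the precise exponent $4/3-\varepsilon$ requires quantifying two competing phenomena — how efficiently a small family can pack disjoint subcollections realizing many trivalent graphs, and how much ``reuse'' of individual curves across different homeomorphism types is possible. The $\varepsilon$ and the $4/3$ strongly suggest that the argument passes through a Zarankin/Kővári–Sós–Turán bound applied to an auxiliary bipartite graph (curves versus pants-decomposition-types, or curves versus pants), so a secondary difficulty will be identifying the right forbidden bipartite subgraph — most likely coming from the observation that a bounded number of curves of $\Gamma$ cannot simultaneously lie in too many realized pants decompositions without forcing a topological contradiction on the surface $\Sigma$. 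The remaining steps (verifying universality of the constructed family, checking the base cases $g=2$ against Figure~\ref{F:exampleintro}, and confirming the $g^{\Theta(g)}$ count from~\cite{bollobas}) are routine.
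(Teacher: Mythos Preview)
Your lower-bound plan is badly miscalibrated. No K\H{o}v\'ari--S\'os--Tur\'an, no incidence structure, no ``reuse factor'' is needed, and the exponent $4/3$ has nothing to do with forbidden subgraphs. The paper's argument (Lemma~\ref{lem:lowerboundallpants}) is a two-line count: a fixed $(3g-3)$-subset of $\Gamma$ that happens to form a pants decomposition realises exactly one homeomorphism type (its own dual trivalent graph), so a universal $\Gamma$ must contain at least as many $(3g-3)$-subsets as there are trivalent graphs on $2g-2$ vertices, namely $g^{\Theta(g)}$. Hence $\binom{|\Gamma|}{3g-3}\gtrsim g^{g}$ up to singly-exponential factors, and $\bigl(|\Gamma|\,e/(3g-3)\bigr)^{3g-3}\gtrsim g^{g}$ rearranges to $|\Gamma|=\Omega(g^{4/3-\varepsilon})$. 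You actually wrote the relevant inequality (``$m^{3g-3}\cdot(\text{reuse factor})\ge g^{\Theta(g)}$'') and then talked yourself out of it; the reuse factor is simply $1$, and the arithmetic $(3g)\log(m/g)\gtrsim g\log g$ is where $4/3$ comes from.

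Your upper-bound plan is a different route from the paper's and, as stated, does not close. The paper (Lemma~\ref{lem:allpants}) does not induct on genus at all: it observes once that every pants decomposition of $\Sigma_g$ contains $g$ disjoint curves whose complement is a $2g$-punctured sphere, fixes one such $g$-tuple on $\Sigma_g$, and then appends the universal family for $S_{2g}$ with \emph{labelled} punctures supplied by Lemma~\ref{lem:labelled sphere}; the total is at most $3^{2g-1}$. Your inductive step, by contrast, cuts a single curve and lands on $\Sigma_{g-1,2}$ (non-separating case) or on $\Sigma_{g_1,1}\sqcup\Sigma_{g_2,1}$ (separating case), i.e.\ on surfaces \emph{with} punctures whose labelling must be tracked so that the homeomorphisms glue back --- this is exactly the ``delicate'' point you flag, and it forces a two-parameter induction on punctured surfaces, which is the labelled problem the paper handles separately rather than recursively. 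Moreover, neither of your proposed recurrences actually solves to $3^{2g-1}$: $f(g)\le 3f(g-1)$ gives $O(3^{g})$ (too strong, and unjustified here), while $f(g)\le\max_{g_1+g_2=g}f(g_1)+f(g_2)+O(1)$ gives only a linear bound; so the claimed solution $3^{2g-1}$ does not follow from what you wrote.
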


The tantalizing gap between the exponential upper bound and the polynomial lower bound is the main open problem that we would like to advertise with this paper. 

The upper bound in Theorem~\ref{maintheorem} follows from an upper bound for the same problem on spheres with punctures. There, one can distinguish between the cases of labelled or unlabelled punctures, which radically changes the bounds: in the first case we consider homeomorphisms keeping the punctures fixed, while in the second case the punctures are allowed to be permuted. Our results are as follows.

\begin{theorem}\label{puncture}
\begin{itemize}
\item Let $\Sigma$ be a sphere with $n$ labelled punctures, and $\Gamma$ be a family of curves with minimal size that realizes all types of pants decompositions. Then

\[2^{n-1}-n-1\leq |\Gamma|\leq 3^{n-1}.\]

\item Let $\Sigma$ be a sphere with $n$ unlabelled punctures, and $\Gamma$ be a  family of curves with minimal size that realizes all types of pants decompositions. Then

\[|\Gamma|=O(n^2) \textrm{ and } |\Gamma|=\Omega(n\log n)\]
\end{itemize}
\end{theorem}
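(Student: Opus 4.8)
In the labelled case, every simple closed curve occurring in a pants decomposition of $\Sigma$ separates the punctures into two sets of size at least $2$, and conversely every partition $\{A,[n]\setminus A\}$ with $|A|,|[n]\setminus A|\ge 2$ is realized by such a curve, since one can take any trivalent tree on the $n$ labelled leaves having an edge that induces this partition. A homeomorphism of $\Sigma$ respecting the labels preserves the induced partition, so $\Gamma$ must contain at least one curve for each of the $\tfrac12(2^n-2-2n)=2^{n-1}-n-1$ such partitions. In the unlabelled case I would stratify by the number $k$ of punctures a curve encloses on its smaller side: for each $k$ with $2\le k\le n/3$, partition the punctures into $\lfloor n/k\rfloor$ groups of size $k$, enclose each group by a curve, and complete these $\lfloor n/k\rfloor$ disjoint curves to a pants decomposition of $\Sigma$ (the complementary pieces are holed spheres with at least three holes, hence pants-decomposable). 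Any homeomorphism realizing this pants decomposition inside $\Gamma$ maps its $\lfloor n/k\rfloor$ curves to $\lfloor n/k\rfloor$ distinct curves of $\Gamma$, each enclosing exactly $k$ punctures on one side; since curves of distinct sizes are distinct, $|\Gamma|\ge\sum_{k=2}^{\lfloor n/3\rfloor}\lfloor n/k\rfloor=\Omega(n\log n)$.

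\textbf{The unlabelled upper bound has an explicit construction.} Fix a cyclic order of the $n$ punctures on $\Sigma$ and let $\Gamma$ be the set of \emph{interval curves}: for each block of $k$ consecutive punctures with $2\le k\le n-2$, the curve enclosing exactly that block. There are $O(n^2)$ of these. Given a homeomorphism type of pants decomposition, that is, an abstract trivalent tree $T$ with $n$ leaves, draw $T$ as a planar tree in a disk with its leaves on the boundary circle, read the leaves off in cyclic order, and identify the punctures with the leaves in that order. Then every clade of $T$ (the set of leaves on one side of an internal edge) is a contiguous block, so every curve of $T$ is an interval curve; moreover these interval curves form a laminar family and are therefore simultaneously realizable by disjoint nested curves, which is exactly the desired pants decomposition. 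Hence $\Gamma$ is a universal family of size $O(n^2)$.

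\textbf{For the labelled upper bound $3^{n-1}$ I would induct on $n$}, upgrading a universal family for the $(n-1)$-punctured sphere to one for $\Sigma$. The structural input is that in any pants decomposition of $\Sigma$ the pair of pants containing the new puncture $p_n$ is bounded by $p_n$ together with two further curves (or one further curve and a second puncture); filling $p_n$ collapses this pair of pants to an annulus and yields a pants decomposition of the $(n-1)$-punctured sphere, while conversely $p_n$ is reinserted into one pair of pants of the smaller decomposition, which is then re-cut into two pairs of pants in one of three ways. So a realization for $n-1$ punctures is promoted by retaining its curves and adding at most three curves around $p_n$, and the constant $3$ reflects these three re-cuttings. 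The subtlety — and the step I expect to be the main obstacle — is that inserting $p_n$ in the collar of an existing curve changes that curve's isotopy class, and a curve bounding a disk with a prescribed set of punctures is not determined up to isotopy by that set, only up to homeomorphism; one therefore has to keep several isotopy classes of each old curve and prove that, even so, the family grows by no more than a factor of $3$ per added puncture, which is precisely what gives the $3^{n-1}$ bound. The closed-surface upper bound of Theorem~\ref{maintheorem} would then follow by cutting a genus-$g$ surface along $g$ suitably chosen curves into a $2g$-holed sphere and invoking this bound.
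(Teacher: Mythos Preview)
Your treatments of the two lower bounds and the unlabelled upper bound are correct and coincide with the paper's. The labelled lower bound is exactly the partition count (the paper's Lemma~\ref{lem:lowerbound}); the unlabelled $O(n^2)$ via interval curves is precisely the construction given at the start of Section~\ref{3.2}; and your $\Omega(n\log n)$ is the argument of Lemma~\ref{L:lbpantsdec}, which sums up to $\lfloor n/2\rfloor$ rather than $n/3$ but the idea and the asymptotic are identical.

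For the labelled upper bound you correctly locate the obstacle but do not overcome it, and the heuristic you give for the constant $3$ is not the right one. The ``three re-cuttings'' of the pair of pants containing $p_n$ contribute at most three \emph{new} curves per step, which is additive; a multiplicative factor must come from the phenomenon you flag at the end---each old curve acquiring several isotopy classes once $p_n$ is present---and nothing in your outline bounds that. Concretely: the homeomorphism $\phi$ of $S_{n-1}$ supplied by induction sends the (now ordinary) point $p_n$ to an uncontrolled location, and isotoping it back to its marked position may drag it across the curves of $\phi(P')$ arbitrarily many times, so a single curve of $\Gamma_{n-1}$ can give rise to infinitely many isotopy classes in $S_n$. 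Without a device to control this, there is no bound at all, let alone a factor of $3$.

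The paper does not induct on $n$. It writes down the family explicitly: place the punctures on a line and, for every subset $S\subset[n]$ and every map $f:[\min S,\max S]\setminus S\to\{\text{above},\text{below}\}$, take the curve $\gamma_S^f$ that encloses $S$ and passes above or below each intermediate puncture as $f$ dictates. The induction is instead on the \emph{tree}: at the root one sends the three branches to the ``all above'', ``all below'', and ``in between'' curves, and recurses inside each; the inherited above/below tags guarantee disjointness across branches. The bound then comes from a direct count of pairs $(S,f)$, giving $\tfrac14(3^n-2n-1)<3^{n-1}$. Thus the $3$ is a static combinatorial factor---each puncture is in $S$, above, or below---rather than a per-step growth rate, and the difficulty you identify never arises.
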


In section~\ref{S:smallgenus}, we provide universal families for surfaces of small genus with labelled punctures which suggest that the upper bound in Theorem~\ref{maintheorem} can be improved upon, but the approach seems unlikely to provide a subexponential bound.

Perhaps surprisingly (at least to us), the bound in the second item of Theorem~\ref{puncture} can be improved if instead of asking for families that realize all types of pants decompositions, we merely ask for a family of curves realizing all the types of pairs of pants $P \subseteq S_n$ (and not the whole decomposition). The homeomorphism class of such a pair of pants is entirely determined by the triplet $(k_1,k_2,k_3)$ counting the number of punctures in the three components it separates, where $k_1+k_2+k_3=n$. There are $O(n^2)$ such triplets, but we provide a random construction that achieves a better bound:

\begin{restatable}{theorem}{unlabel}\label{L:unlabelledpants}
There exists a family of simple closed curves of size $O(n^{4/3}\log^{2/3}n)$ on the sphere with $n$ punctures $S_n$ that realizes all types of pants.
\end{restatable}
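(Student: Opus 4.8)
The plan is to build $\Gamma$ randomly as a union of nested families of curves, one for each relevant "scale" of a pair of pants. Recall that a pair of pants $P\subseteq S_n$ is determined by an unordered triple $(k_1,k_2,k_3)$ with $k_1+k_2+k_3=n$, and realizing $P$ amounts to producing, via a homeomorphism, three disjoint simple closed curves bounding disjoint subsurfaces carrying $k_1$, $k_2$ and $k_3$ punctures respectively. Fix a reference identification of $S_n$ with a round sphere and place the $n$ punctures on a circle in cyclic order $1,\dots,n$. A single simple closed curve on the sphere separating the punctures is then (up to homeomorphism fixing the cyclic data) encoded by the subset of punctures on one side; in particular a curve enclosing any arc $\{i,i+1,\dots,i+k-1\}$ of $k$ consecutive punctures realizes "a curve with $k$ punctures inside". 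To realize the pants $(k_1,k_2,k_3)$ it thus suffices to have in $\Gamma$ three pairwise disjoint, pairwise non-nested such "interval curves" enclosing $k_1$, $k_2$, $k_3$ consecutive punctures and partitioning the circle; equivalently it suffices that $\Gamma$ contain, for some rotation of the labels, interval curves of sizes $k_1$ and $k_1+k_2$ (the third is then forced). So the combinatorial target is: a family $\mathcal R$ of ``radii'' $r\in\{1,\dots,n-1\}$ together with, for each $r\in\mathcal R$, a set $S_r\subseteq\Z/n$ of ``starting points'', such that for every pair $(a,b)$ with $1\le a<b\le n-1$ and $a,b,n-a-b$ the desired sizes, there is a rotation placing an interval of length $a$ and a disjoint interval of length $b$ with both endpoints chosen from the available starting points. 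The size of $\Gamma$ is $\sum_{r\in\mathcal R}|S_r|$.

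Next I would solve this covering problem probabilistically, separately for two regimes of the smallest part of the triple. If every part is large, say all $k_i\ge t$ for a threshold $t$ to be optimized, then take $\mathcal R$ to be a geometric-type net of radii of density $O(n/t)$ chosen so that any target length is within a multiplicative $(1+\epsilon)$ factor of some element --- but here we need exact sizes, so instead one lets the curves be slightly ``thick'': replace a needed interval of length $k_i$ by enclosing exactly $k_i$ punctures among a slightly larger available window, which is possible because we may choose which of several curves at nearby radii to use and slide. Concretely, for each radius $r$ we pick $S_r$ to be a uniformly random subset of $\Z/n$ of size $m$; a fixed target $(a,b)$ is realized as long as some rotation lands two required starting points in $S_r$ and $S_{r'}$, an event of probability $1-(1-m^2/n^2)^{\Theta(n)}$ for suitable $r,r'$, so $m=O(\sqrt{n\log n})$ suffices by a union bound over the $O(n^2)$ targets, giving $|\Gamma|=O(|\mathcal R|\cdot\sqrt{n\log n})$. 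Choosing $|\mathcal R|$ of order $n^{5/6}/\log^{1/6} n$ and balancing against the second regime yields the claimed $O(n^{4/3}\log^{2/3}n)$. In the complementary regime, where the smallest part $k_i<t$, the pants is ``unbalanced'': it is cut off by a small curve enclosing $<t$ punctures, and then the remaining two parts split a subsurface with $n-k_i>n-t$ punctures into pieces of sizes $k_j$, $k_l$; here I would include in $\Gamma$ \emph{all} interval curves enclosing at most $t$ consecutive punctures at \emph{all} starting positions (that is $O(nt)$ curves), which lets us peel off the small part at no further cost, and then realize the residual two-curve problem with the same random net as above but now on a range of ``coarse'' radii only, since both remaining parts are comparatively large.

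The main obstacle I anticipate is the exactness of the puncture counts: unlike joint-flattening or net arguments where a constant-factor approximation of a length is fine, realizing the pants $(k_1,k_2,k_3)$ demands curves enclosing \emph{precisely} $k_1$ and $k_1+k_2$ punctures, so a naive geometric net of radii does not suffice and one is forced either to store $\Omega(n)$ radii or to introduce the ``sliding'' device above, where several random starting points at a fixed radius give enough freedom to hit an exact count --- quantifying that freedom (i.e.\ showing $m=O(\sqrt{n\log n})$ random starts per radius really do, whp, realize every target exactly, not just approximately) is the crux of the argument and is where the $\log^{2/3}$ factor and the precise exponent $4/3$ come from. Once the two regimes are set up with the right threshold $t=\Theta(n^{1/3}\log^{1/3}n)$, the union bound over the $O(n^2)$ types, together with $|\mathcal R|$ chosen to balance $|\mathcal R|\sqrt{n\log n}$ against $nt$, closes the proof.
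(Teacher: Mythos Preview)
Your proposal has a real gap, and the paper's argument is considerably simpler than the machinery you are building.

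First, a small but telling slip: two nested interval curves do \emph{not} bound a pair of pants --- the middle region is an annulus. To realize the type $(k_1,k_2,k_3)$ you genuinely need three pairwise disjoint curves $\gamma_{i,i+k_1}$, $\gamma_{i+k_1,i+k_1+k_2}$, $\gamma_{i+k_1+k_2,i}$, so the combinatorial target is a \emph{triple} of endpoints $\{i,\,i+k_1,\,i+k_1+k_2\}$, not a pair. Once you correct this, your per-rotation success probability drops from $m^2/n^2$ to $m^3/n^3$, which already upsets your balance.

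More fundamentally, the ``geometric net of radii'' idea cannot work here, and your own caveat about exactness is fatal rather than a technicality. An interval curve of length $r$ encloses exactly $r$ punctures regardless of its starting point, so there is no ``sliding'' mechanism: if $k_1\notin\mathcal R$ you simply cannot produce a curve enclosing $k_1$ punctures from your family, no matter how many starting points you sample. Hence $\mathcal R$ must contain every integer in $\{1,\dots,n-1\}$, i.e.\ $|\mathcal R|=\Theta(n)$, and with independent random sets $S_r$ of size $m$ per radius you get $\Theta(nm)$ curves, which optimizes to $n^{5/3+o(1)}$ --- worse than claimed. The two-regime split and the threshold $t$ do not rescue this, because even in the ``balanced'' regime every integer part size up to $n/3$ must be an available radius.

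The paper sidesteps all of this with a single stroke: choose \emph{one} random set $S\subseteq\Z/n\Z$ by including each point independently with probability $p\asymp (\log n)^{1/3}n^{-1/3}$, and take $\Gamma=\{\gamma_{i,j}:i,j\in S\}$. Then $|\Gamma|\le|S|^2=O(n^{4/3}\log^{2/3}n)$, and the type $(k_1,k_2,k_3)$ is realized whenever some cyclic shift places $i$, $i+k_1$, $i+k_1+k_2$ simultaneously in $S$; this happens with probability $\ge p^3$ per shift, so a Chernoff/union bound over the $O(n^2)$ types finishes the proof. The crucial point you are missing is that sharing a \emph{single} endpoint set across all radii makes the curve count $|S|^2$ rather than $n\cdot|S|$, and it is this correlation --- not any multiscale decomposition --- that produces the exponent $4/3$.
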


Finally, one can phrase similar universality conditions for families of edges connecting punctures. For example, in the planar case, given a polygon with $n$ unlabelled vertices, one can look for families of edges realizing all the homeomorphism classes of triangulations of $\pi_n$, or merely all the homeomorphism classes of triangles in $\pi_n$. In that case, the situation is constrained enough that we can provide upper bounds and lower bounds which are almost tight.

\begin{theorem}\label{triangless}
In a polygon with $n$ vertices, the minimal size of a family of edges $E$ realizing all triangulations satisfies $|E|=O(n^2)$ and $|E|=\Omega(n^{2-\varepsilon})$ for any $\varepsilon>0$. The minimal size of a family of edges $E$ realizing all types of triangles satisfies $|E|=O(n^{4/3}\log^{2/3}n)$ and $|E|=\Omega(n^{4/3})$.
\end{theorem}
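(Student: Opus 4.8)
The four estimates fall into three easy ones and one that is the real content. I treat them in turn, reusing Theorem~\ref{L:unlabelledpants} for one of the upper bounds.

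\textbf{The upper bounds.} The bound $|E|=O(n^2)$ is immediate: take $E$ to be the set of \emph{all} $\binom{n}{2}$ chords of $\pi_n$ together with its $n$ boundary edges. Every triangulation is literally a sub-family of $E$, realized by the identity homeomorphism. For the bound $|E|=O(n^{4/3}\log^{2/3}n)$ on families realizing all triangle types, observe that a triangle in $\pi_n$ is the same combinatorial gadget as a pair of pants $P\subseteq S_n$: three pairwise non-crossing chords on three vertices, cutting off arcs of sizes $k_1,k_2,k_3$ with $k_1+k_2+k_3=n$, exactly mirroring three curves bounding a pants with $k_i$ punctures in the complementary pieces. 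Hence the randomized construction of Theorem~\ref{L:unlabelledpants} applies verbatim: pick a uniformly random vertex subset $W$ with $|W|=\Theta(n^{2/3}\log^{1/3}n)$ and let $E$ be all chords with both endpoints in $W$ (plus the $n$ boundary edges), so $|E|=\binom{|W|}{2}=\Theta(n^{4/3}\log^{2/3}n)$; for a fixed type $(k_1,k_2,k_3)$ there are $n$ candidate vertex triples with the right arc structure, each inside $W$ with probability $(|W|/n)^3=\Theta((\log n)/n)$, so the expected number realized is $\Theta(\log n)$, a Janson-type inequality bounds the probability that none is realized by $n^{-\Omega(1)}$, and a union bound over the $\Theta(n^2)$ types concludes.

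\textbf{The lower bound $|E|=\Omega(n^{4/3})$ for triangles.} If $E$ realizes all triangle types, then for each of the $\Theta(n^2)$ compositions $n=k_1+k_2+k_3$ into three positive parts (up to the dihedral action) the set $E$ contains three chords forming a triangle of that type. Since in a convex polygon any three vertices pairwise joined by chords of $E$ bound a triangle, the number of triangle types realized is at most the number of triangles of the graph $E$ in the graph-theoretic sense, and a graph with $m$ edges has $O(m^{3/2})$ triangles (e.g. $3\cdot(\#\text{triangles})\le\sum_{uv\in E}\min(d_u,d_v)=O(m^{3/2})$, or by Kruskal--Katona). Therefore $\Theta(n^2)\le O(|E|^{3/2})$, i.e. $|E|=\Omega(n^{4/3})$.

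\textbf{The lower bound $|E|=\Omega(n^{2-\varepsilon})$ for triangulations — the main obstacle.} The naive counting route is insufficient: the number of homeomorphism types of triangulations of $\pi_n$ is $4^n/\mathrm{poly}(n)$, and if $E$ realizes all of them it supports at least that many distinct non-crossing triangulations as subgraphs, yet a direct count of triangulations supported by an $m$-edge graph only forces $m=\Omega(n)$, which is far from the target. The plan is instead to produce, for every family $E$ with $|E|$ noticeably below $n^2$, a type it fails to realize, by quantifying how spread out the diagonals of a triangulation must be. A first step already gives $\Omega(n\log n)$: for each scale $s$ there is a triangulation type whose diagonals include $\lfloor n/s\rfloor$ pairwise non-crossing chords of arc-length exactly $s$ (take the chords $(0,s),(s,2s),\dots$ and triangulate the pieces they cut off), and since homeomorphisms of $\pi_n$ preserve arc-length, realizing this type forces $\lfloor n/s\rfloor$ distinct arc-length-$s$ chords into $E$; summing over $s$ yields $|E|\ge\sum_{s}\lfloor n/s\rfloor=\Theta(n\log n)$. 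Pushing this up to $n^{2-\varepsilon}$ requires types in which diagonals of many scales are simultaneously forced into ``independent'' positions, together with an argument — probabilistic, or via an explicit rigid family such as carefully chosen zigzag/caterpillar triangulations with few symmetries — showing that no single homeomorphism can collapse all these positions onto a small chord set. Controlling this for an adversarially \emph{structured} sparse family, rather than a merely random one, is the technical heart of the statement, and is where I expect essentially all of the difficulty to lie.
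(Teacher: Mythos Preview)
Your treatment of the two upper bounds and of the $\Omega(n^{4/3})$ lower bound for triangles is correct and matches the paper's approach exactly.

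The gap is in the $\Omega(n^{2-\varepsilon})$ lower bound for triangulations. You flag this as ``the technical heart'' and only supply an $\Omega(n\log n)$ argument, expecting the full bound to require a delicate construction of rigid triangulation types together with a structural analysis of sparse chord families. In fact the paper obtains it by a one-line generalisation of the very argument you used for triangles. A family $E$ realising all triangulation types in particular realises all types of \emph{$\ell$-gons} inscribed in $\pi_n$: an $\ell$-gon on vertices $v_1,\dots,v_\ell$ cuts off arcs of sizes $(k_1,\dots,k_\ell)$ with $\sum k_i=n-\ell$, and there are $\Theta(n^{\ell-1})$ such types. On the other hand, the number of $\ell$-cycles in any graph with $|E|$ edges is $O(|E|^{\ell/2})$ (e.g.\ from $\mathrm{tr}(A^\ell)\le(2|E|)^{\ell/2}$). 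Hence $|E|^{\ell/2}\gtrsim n^{\ell-1}$, i.e.\ $|E|=\Omega\bigl(n^{2-2/\ell}\bigr)$, and taking $\ell$ large enough gives $\Omega(n^{2-\varepsilon})$.

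So the missing idea is simply to replace ``triangle'' by ``$\ell$-cycle'' in your own Kruskal--Katona style count; no probabilistic construction or rigidity argument is needed.
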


One can similarly investigate families of edges realizing all the one-vertex (or several-vertex) triangulations of surfaces. The techniques that we use for pants decompositions apply equally well for that setting. For the sake of avoiding redundancies, we do not include the corresponding results in this paper.

{\bf Motivations.} An important motivation for the work in this paper is the following question raised by Hubard, Kalu\v{z}a, the second author and Tancer in~\cite{shortestpaths}:

\begin{question}\label{Q:shortestpaths}
Given a surface $\Sigma$ of genus $g$, does there exist a Riemannian metric on $\Sigma$ such that any graph embeddable on $\Sigma$ can be embeddded so that the edges are shortest paths on $\Sigma$?
\end{question}

Such a metric would provide a strong generalization of the well-known F\`ary-Wagner~\cite{fary,wagner} theorem stipulating that any planar graph can be embedded in the plane so that the edges are straight lines. This question is furthermore motivated by problems around the joint crossing numbers of graphs on surfaces, and is known to have an affirmative answer if we merely ask for geodesics instead of shortest paths, or if we allow the metric to depend on the graph (using circle packing)---we refer to~\cite{shortestpaths} for more background and known results around this question.

The connection between Question~\ref{Q:shortestpaths} and the problems studied in this paper can be easily explained as follows. Given a pants decomposition of a surface of genus $g$, it is easy (see for example~\cite[Section~5]{shortestpaths}) to subdivide each curve a constant number of times and connect the resulting vertices with $O(g)$ edges so that we obtain a connected embedded graph. Furthermore, one can ensure that this graph has the property that if it is embedded so that the edges are shortest paths, then the original pants decomposition can be realized so that each curve consists of a constant number of shortest paths. Therefore, if the answer to Question~\ref{Q:shortestpaths} is affirmative, it means that there exists a metric on $\Sigma$ so that \emph{any} pants decomposition can be realized so that each curve consists of a constant number of shortest paths. Furthermore, by a standard cut and paste argument, shortest paths pairwise cross at most once. Therefore, an affirmative answer to Question~\ref{Q:shortestpaths} would imply that there exists a family of curves realizing all types of pants decompositions on $\Sigma$ so that any pair of curves crosses a constant number of times. But then, such a family of curves would need to have polynomial size~\cite{przytycki,greene} in $g$. Thus if we could prove that any family realizing all types of pants decompositions must have superpolynomial size, this would provide a negative answer to Question~\ref{Q:shortestpaths}. Our result in Theorem~\ref{maintheorem} provides partial results in this direction. 

In a completely different setting, finding universal families is of interest in Teichm\"uller theory, and in particular in the study of the Weil-Petersson metric. Without going into too many details, this metric on Teichm\"uller space, which is a deformation space of marked hyperbolic surfaces of given topology, has the special property that one can "pinch" a family of disjoint simple curves until they are length $0$ in finite time. In particular, one can find a metric completion of the space by adding degenerate surfaces with pinched curves (so-called noded surfaces). A hyperbolic surface is determined by lengths and twists of curves in a pants decomposition and hence a surface with pinched pants is fully determined by the combinatorics of its decomposition. Now, applying the group of self-homeomorphisms (the mapping class group) to the metric completion of Weil-Petersson space sends these fully degenerate surfaces to the set of types of pants decompositions. The quotient of Teichm\"uller space under the action of the mapping class group is moduli space, and with the induced metric this is Weil-Petersson moduli space, a well-studied object currently used to great effect in geometric probability theory (see \cite{guth2011pants,mirzakhani}). The geometry of this space - and in particular the asymptotic geometry as the genus grows - has been studied from different angles, and certain geometric invariants, such as the inradius \cite{wu2019growth} and the diameter \cite{10.1215/00127094-1507312} are somewhat well understood. There would be different possibilities to define a {\it center} for this moduli space - a type of midpoint that is as close as possible to all other points - but the Weil-Petersson metric is tricky to work with, and distances are difficult to compute. In light of what precedes, taking a type of barycentric average over the degenerate surfaces defined above, would be a reasonable choice for, at least, a rough centerpoint. By a theorem of Wolpert, the distance between a surface $X$ and a fully degenerate surface is bounded above by $\sqrt{2\pi L}$ where $L$ is the length of the pants decomposition on $X$ that one pinches. And so a surface which "wears" all topological types of pants decompositions well (that is such that their length is relatively short) would be a good candidate for being a center point. Finding a candidate surface would be a type of dual problem to the problem of finding a surface whose shortest pants decomposition is of maximal length (the study of the so-called Bers' constant, see for instance \cite{buser}). And this requires controlling the lengths of curves that you make the pants decompositions out of, and the fewer lengths you need to control, the easier this is. And hence, in this context as well, the question of how many curves you need to fabricate all types of pants decompositions is relevant. If the number of necessary curves is large (for instance close to our upper bounds) then by a counting argument, this would provide a lower bound for the length of curves in the center of moduli space. If, on the contrary, fewer curves are necessary, this would be helpful when constructing a candidate center as one only has to try and minimize the length of curves in a smaller set. 

Finally, let us mention that there are various related problems in combinatorics and graph theory where one investigates a given class of combinatorial objects and aims at finding a universal family that contains the entire class in some way. Such investigations date back at least to Rado~\cite{rado}. Depending on the precise notions considered, the size of this universal object might or might not be exponentially larger than the size of the objects in the class. A basic example of this is a \emph{k-universal permutation}, or \emph{k-superpattern} on $n$ symbols, which is a permutation containing all the possible patterns on $k$ symbols as subsequences. The smallest known k-superpatterns have size quadratic in $k$, but the best possible constant is still unknown, see for example Miller~\cite{miller}. In contrast, one might consider \emph{superpermutations}, which are strings on $n$ letters where all the permutations of size $n$ appear as substrings: one can easily prove that such superpermutations necessarily have a size exponentially large in $n$. Many universality notions for graphs have been investigated, a famous problem being the optimal size of a universal graphs containing all graphs of a fixed size as an induced subgraph, see Alon~\cite{alon} and the references therein. Perhaps closest to this work is a series of recent papers~\cite{bonamy,dujmovic,esperet} on universal graphs that contain all planar or bounded genus graphs of a fixed size as a subgraph or an induced subgraph: such universal graphs only have polynomial size (actually near-linear), but the problems that we study in this paper do not seem to be amenable to these techniques.

{\bf Structure.} This article is structured as follows. After a short preliminary section, we treat the case of unlabelled spheres and polygons. In Section \ref{sec:labpunctures}, we investigate spheres with labelled punctures. Results on the asymptotic growth in terms of genus for closed surfaces of universal families are in Section \ref{sec:genus}. In the final section, we show how to adapt the results for punctured spheres to surfaces of small genus. While this last part is technical, with somewhat incremental progress, it illustrates how adding genus increases the complexity of the problem, shedding light on the size of the gap in Theorem \ref{maintheorem}.

\section{Preliminaries and notations}

Throughout this paper, we denote by $\Sigma$ an orientable surface of genus $g$ with $n$ punctures (where $n$ can be zero) of negative Euler characteristic, i.e., $2g+n>2$. A \emph{pair of pants} (or just \emph{pants}) is a topological surface homeomorphic to a sphere with $3$ punctures, and a \emph{pants decomposition} is a set of simple and disjoint curves cutting a surface into a family of pairs of pants. Pairs of pants are the simplest possible surfaces that one can obtain after cutting along simple closed curves, and thus constitute fundamental building blocks in the study of surfaces, their geometric structures and moduli spaces (see for example~\cite{farb2011primer}).

In this article, we will be working with different surfaces and some of their substructures (e.g., triangulations, pairs of pants and pants decompositions). We denote the $n$-sided polygon with $\pi_n$, which we consider up to homeomorphism which are allowed to rotate the punctures. A \emph{triangle} on $\pi_n$ is the homeomorphism class of a triangle with endpoints on distinct vertices of $\pi_n$. A \emph{triangulation} of $\pi_n$ is 
a maximal set of interior disjoint triangles in $\pi_n$.

We denote the surface of genus $g$ and $n$ punctures by $\Sigma_{g,n}$, using $\Sigma_g:=\Sigma_{g,0}$ and $S_n:=\Sigma_{0,n}$ for shorthand. We consider two kinds of homeomorphisms for surfaces: homeomorphisms that fix punctures pointwise or globally (i.e., that can permute them). We will refer to the first case as the \emph{labelled} case and the second case as the \emph{unlabelled} case.

\section{Unlabelled punctures}
\subsection{Unlabelled punctures: Realizing pants and triangles}
In this section, we only consider planar surfaces, i.e., the polygon $\pi_n$ and the surface $S_n$. We number the vertices/punctures (in consecutive order for the polygon) from $1$ to $n$. We are working in the unlabelled setting, where the punctures are indistinguishable. Therefore, the \emph{topological type} of a pair of pants $P \subseteq \Sigma_{0,n}$ is the triple $(k_1,k_2,k_3)$ such that the three components of $S_n \setminus P$ have $k_1, k_2$ and $k_3$ punctures (we do not count the puncture $\partial P$). Without loss of generality, we always assume that $k_1\leq k_2 \leq k_3$, and note that in a topological type we always have $k_1+k_2+k_3=n$. We say that a family of simple closed curves $\Gamma$ \emph{realizes all types of pants} if for any topological type, there exist three disjoint curves in $\Gamma$ bounding a pair of pants that realizes that type.

\subsubsection{Upper bounds}

\unlabel*

\begin{proof}
In this proof, we work with arithmetic modulo $n$ and the interval notations are also modulo $n$: for example $[|n-2,2|]=\{n-2,n-1,n,1,2\}$.

The proof is based on a random construction. Let us denote by $\gamma_{i,j}$ a simple closed curve separating all the punctures in $[|i,j-1|]$ from the others, and such that $\gamma_{i,j}$ and $\gamma_{k,\ell}$ are disjoint whenever $[|i,j-1|]$ and $[|k,\ell-1|]$ are disjoint, see Figure~\ref{F:standardcurves}. If $i=j$ we take a simple contractible curve disjoint from all the others.

\begin{figure}
\centering
\includegraphics[width=11cm]{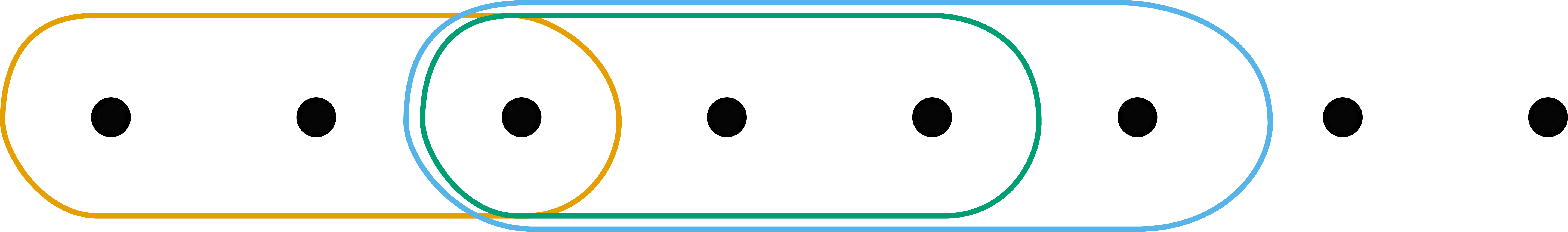}
\caption{A sphere with eight punctures with a choice of curves $\gamma_{1,3}$, $\gamma_{3,5}$ and $\gamma_{3,6}$.}
\label{F:standardcurves}
\end{figure}

We set $p=c\log^{1/3}n/n^{1/3}$ and define a random set $S \subseteq [|1,n|]$ by putting each integer in $[|1,n|]$ in $S$ with probability $p$. Then by Chernoff bounds, the set $S$ has size at most $2cn^{2/3}\log^{1/3}n$ with probability at least $1-e^{-cn^{2/3}\log^{1/3}n/3}>1/2$ for big enough $c$. We define $\Gamma:=\{\gamma_{i,j} \mid (i,j) \in S^2\}$. Now $\Gamma$ has size at most $4c^2n^{4/3} \log^{2/3} n$ with probability at least $1/2$. 

We now show that there is a nonzero probability that $\Gamma$ realizes all types of pants. Let $(k_1,k_2,k_3)$ be a topological type (thus we have $k_1+k_2+k_3=n$). For $i,j,k$ three integers in $[|1,n|]$, then the three curves $\gamma_{i,j}$, $\gamma_{j,k}$ and $\gamma_{k,i}$ bound a pair of pants $P_{i,j,k}$, which is of type $(k_1,k_2,k_3)$ if and only if $j-i=k_1$ and $k-j=k_2$. So for $i \in [|1,n|]$, we denote by $X^{k_1,k_2,k_3}_i$ the random variable indicating the event that $(i,i+k_1,i+k_1+k_2)$ belongs to $S$, which happens with probability at least $p^3$ (note that if $k_1=0$ or $k_2=0$, the probability is higher). Then by Chernoff bounds, the probability that $(k_1,k_2,k_3)$ is not realized is equal to $P(\sum_i X_i^{k_1,k_2,k_3}=0)\leq e^{\frac{-p^3}{2n}}\leq \frac{1}{4n^2}$ for big enough $c$. Note that this probability does not depend on $(k_1,k_2,k_3)$, and therefore by the union bound there is a probability at least $1/4$ that all types of pants are realized. Since $1/4+1/2<1$, with nonzero probability we have the correct bound on the size of $\Gamma$ and it realizes all types of pants, concluding the proof.
\end{proof}

Similarly to the problem of realizing all types of pants, we can look for families of edges realizing triangles in a polygon $\pi_n$, where the goal is to realize all types of triangles $T$. A type of triangle is determined by a triple $(k_1,k_2,k_3)$ such that the three components of $\pi_n \setminus T$ contain respectively $k_1, k_2$ and $k_3$ vertices of the polygon $\pi_n$. Now, the exact same proof provides the following, which mirrors Lemma~\ref{L:unlabelledpants}.

\begin{lemma}\label{L:unlabelledtriangles}
There exists a family of edges on $\pi_n$ of size $O(n^{4/3} \log^{2/3} n)$ realizing all types of triangles.
\end{lemma}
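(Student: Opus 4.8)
The plan is to run the same random construction as in the proof of Lemma~\ref{L:unlabelledpants}, transported from the sphere $S_n$ to the polygon $\pi_n$. Concretely, for $1 \le i < j \le n$ let $e_{i,j}$ denote the chord of $\pi_n$ joining vertex $i$ to vertex $j$ (choosing the chords so that $e_{i,j}$ and $e_{k,\ell}$ are non-crossing whenever the index intervals $[|i,j|]$ and $[|k,\ell|]$ are nested or disjoint, exactly as the curves $\gamma_{i,j}$ were chosen), and for $i=j$ take a trivial edge at a vertex. The key observation, which makes the transfer immediate, is that a triple of vertices $i,j,k$ spans a triangle in $\pi_n$ whose three sides are $e_{i,j}$, $e_{j,k}$, $e_{k,i}$, and that this triangle has type $(k_1,k_2,k_3)$ (counting vertices strictly between consecutive chosen vertices, cyclically) precisely when $j-i=k_1$ and $k-j=k_2$ modulo $n$ — the same arithmetic condition as for $P_{i,j,k}$. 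So realizing all types of triangles reduces to exactly the same combinatorial covering problem on index triples.

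First I would set $p = c\log^{1/3}n / n^{1/3}$ and let $S \subseteq [|1,n|]$ be the random subset obtained by including each vertex independently with probability $p$; then $\Gamma := \{e_{i,j} \mid i,j \in S\}$. A Chernoff bound gives $|S| \le 2cn^{2/3}\log^{1/3}n$, hence $|\Gamma| \le 4c^2 n^{4/3}\log^{2/3}n$, with probability at least $1/2$ for $c$ large. Second, for a fixed type $(k_1,k_2,k_3)$ with $k_1+k_2+k_3=n$, let $X_i$ indicate that $\{i, i+k_1, i+k_1+k_2\} \subseteq S$; this has probability at least $p^3$, the $X_i$ over a suitable residue class are independent, and another Chernoff bound shows the type fails to be realized with probability at most $\exp(-p^3 n / 2) \le 1/(4n^2)$ for $c$ large. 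Third, there are $O(n^2)$ types, so the union bound gives probability at least $3/4$ that all types are realized; combined with the size bound and $1/2 + 1/4 < 1$, a valid family exists.

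There is essentially no serious obstacle here: the construction, the estimates, and the counting are word-for-word the same as in Lemma~\ref{L:unlabelledpants}. The only point requiring (minor) care is the dictionary between the planar picture and the polygon: one must verify that the chords $e_{i,j}$ can be chosen simultaneously non-crossing in the required pattern — this is the standard fact that a laminar family of index intervals is realized by pairwise non-crossing chords of a convex polygon — and that the three sides of the triangle spanned by $i,j,k$ are indeed the chords $e_{i,j}, e_{j,k}, e_{k,i}$ with the vertex counts matching the asserted type. Both are immediate from the combinatorics of convex polygons. Hence the proof is: repeat the argument of Lemma~\ref{L:unlabelledpants} verbatim, replacing each curve $\gamma_{i,j}$ by the chord $e_{i,j}$, the pair of pants $P_{i,j,k}$ by the triangle on $\{i,j,k\}$, and "realizes all types of pants" by "realizes all types of triangles".
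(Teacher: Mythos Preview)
Your proposal is correct and follows exactly the approach the paper intends: the paper's entire proof of this lemma is the sentence ``the exact same proof provides the following, which mirrors Lemma~\ref{L:unlabelledpants}'', and you have spelled out precisely that transfer, replacing $\gamma_{i,j}$ by chords $e_{i,j}$ and pants $P_{i,j,k}$ by triangles on $\{i,j,k\}$. The only cosmetic discrepancy is that for triangles in $\pi_n$ the three vertex counts sum to $n-3$ rather than $n$ (the three corner vertices are excluded), so the arithmetic condition reads $j-i=k_1+1$, etc.; this shift is immaterial to the probabilistic estimate.
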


\subsubsection{Lower bounds}

The bound obtained in Lemma~\ref{L:unlabelledtriangles} is sharp up to logarithmic factors.

\begin{lemma}\label{L:lbtriangles}
For a polygon $\pi_n$, any family realizing all types of triangles has size at least $\Omega(n^{4/3})$.
\end{lemma}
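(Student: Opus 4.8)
The plan is to reinterpret a family of edges on $\pi_n$ as a graph on $n$ vertices, and then combine two elementary facts: a family realizing all triangle types induces many triangles in this graph, while a sparse graph has few triangles.

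First I would identify the vertices of $\pi_n$ with $\Z/n\Z$ in cyclic order. Since a chord joining two vertices of a disk is determined up to isotopy by its endpoints, a family of edges $E$ is simply a graph $G=(\Z/n\Z,E)$, and a triangle of $\pi_n$ with (necessarily distinct) vertices $a,b,c$ — whose three sides are then three distinct chords — is exactly a $3$-cycle $abc$ of $G$. The type $(k_1,k_2,k_3)$ of such a triangle, recording the numbers of polygon vertices in its three complementary regions, equals the unordered triple of gaps of the cyclic arrangement of $\{a,b,c\}$; in particular it depends only on the $3$-cycle and satisfies $k_1+k_2+k_3=n-3$. Consequently, if $E$ realizes all types of triangles then $G$ contains, for each unordered triple of nonnegative integers summing to $n-3$, a $3$-cycle of that type. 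Distinct triples are realized by distinct $3$-cycles, so $G$ has at least as many triangles as there are such triples, that is, $\Omega(n^2)$ of them.

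Second I would use the classical bound that a graph with $M$ edges has at most $\tfrac16(2M)^{3/2}=O(M^{3/2})$ triangles (a consequence of the Kruskal--Katona theorem; alternatively one bounds the number of triangles through an edge $uv$ by $\min(\deg u,\deg v)$, sums over edges, and applies Cauchy--Schwarz). Applying this to $G$, the two estimates together force $|E|^{3/2}=\Omega(n^2)$, hence $|E|=\Omega(n^{4/3})$, which is the claim.

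I do not expect a real obstacle here: the proof is essentially the two observations above. The points needing care are all routine — verifying that $(k_1,k_2,k_3)$ is exactly the cyclic gap vector (so that it is well defined on $3$-cycles and that no type is overlooked), the elementary count of $\Omega(n^2)$ types, and recalling the $O(M^{3/2})$ bound on the number of triangles.
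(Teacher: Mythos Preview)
Your proposal is correct and follows essentially the same approach as the paper: view the family of edges as a graph on $n$ vertices, observe that realizing all $\Theta(n^2)$ triangle types forces $\Omega(n^2)$ distinct $3$-cycles, and invoke the classical $O(|E|^{3/2})$ bound on the number of triangles in a graph (the paper cites Rivin; you cite Kruskal--Katona). The only difference is that you spell out the correspondence between triangle types and cyclic gap vectors more explicitly, which is harmless.
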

\begin{proof}
The vertices of the polygon $\pi_n$ and the edges in a family $E$ realizing all types of triangles form a graph. It is known (see for example Rivin~\cite{rivin}) that any graph with $|E|$ edges has $O(|E|^{3/2})$ triangles (i.e., cycles of length $3$). Therefore, in order to realize  $\Theta(n^2)$ types of triangles, the graph must have at least $\Omega(n^{4/3})$ edges.
\end{proof}

\begin{remark}
While we believe the bound for types of pants in Lemma~\ref{L:unlabelledpants} to be roughly sharp, as in Lemma~\ref{L:unlabelledtriangles}, the same argument does not apply. The only lower bound we know for the number of curves needed to realize all topological types of pants on $S_n$ with unlabelled punctures is the trivial lower bound of $\Omega(n)$.
\end{remark}
\subsection{Unlabelled punctures: Realizing pants decompositions and triangulations}\label{3.2}

We now turn our attention to pants decompositions and triangulations. The \emph{topological type} of a pants decomposition or triangulation is its homeomorphism type, and in the unlabelled case, it is completely determined by the trivalent tree encoding the adjacencies of the pairs of pants/triangles. As before, we say that a family of curves $\Gamma$ \emph{realizes all types of pants decompositions} if for any topological type of pants decomposition, there exists a subset of $3g-3$ curves in $\Gamma$ inducing that topological type. Therefore, it is trivial to realize all types of pants decompositions in $S_n$ using $O(n^2)$ curves, respectively edges: one can simply number the punctures from $1$ to $n$ arbitrarily and for any pair $(i,j)$, take a curve running around the punctures from $i$ to $j$, as pictured in Figure~\ref{F:standardcurves}. Likewise, one can realize all types of triangulations in $\pi_n$ using $O(n^2)$ edges. For polygons, we can prove an almost matching lower bound.

\begin{lemma}\label{L:lbtriangulations}
For any $\varepsilon>0$, any family of edges realizing all triangulations of $\pi_n$ has $\Omega(n^{2-\varepsilon})$ edges.
\end{lemma}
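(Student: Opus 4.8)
The plan is to count triangulations and compare with the number of triangulations realizable inside a fixed edge set. First I would recall that the number of distinct topological types of triangulations of $\pi_n$ (equivalently, of trivalent trees with $n-2$ leaves, up to the cyclic/dihedral symmetry of the polygon) is $2^{\Theta(n)}$, hence super-polynomial in $n$; more than $n^{K}$ for every constant $K$ once $n$ is large. So if a family $E$ of edges realizes all triangulations, then for every topological type there is a homeomorphism of $\pi_n$ carrying some triangulation of that type into $E$. The key combinatorial point I would isolate is: a triangulation of $\pi_n$ uses exactly $n-3$ diagonals (plus the $n$ boundary edges), so each realized triangulation corresponds to a choice of $n-3$ edges from $E$ forming a triangulation; crucially the $n$ boundary edges are forced to be present in $E$ (or can be assumed present at the cost of an additive $n$), so the content is in the diagonals. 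Thus the number of topological types realized by $E$ is at most the number of $(n-3)$-subsets of $E$, i.e. at most $\binom{|E|}{n-3}$, but we must also account for the mapping-class-group action: each topological type may be placed in $E$ in many ways, so this bound is in the wrong direction. The right way to phrase it: the set of all triangulations that sit inside $E$ as honest subgraphs has size at most $\binom{|E|}{n-3}$, and these triangulations realize \emph{every} topological type (by the universality hypothesis, after applying the homeomorphism we land in $E$). Hence the number of topological types is at most $\binom{|E|}{n-3} \le |E|^{n-3}$.

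Now I would take logarithms: $\log(\#\text{types}) \le (n-3)\log|E|$, while $\log(\#\text{types}) = \Theta(n)$, which only gives $\log|E| = \Omega(1)$ — far too weak. So the naive subset count is not enough, and the main obstacle is exactly this: a single edge set of size even $O(n)$ contains exponentially many $(n-3)$-subsets, so I need a smarter bound on how many of those subsets are actually triangulations with genuinely different types. The fix I would pursue is to use the planarity/non-crossing structure: the diagonals of a triangulation are pairwise non-crossing, so the relevant count is not all $(n-3)$-subsets of $E$ but the number of maximal non-crossing subsets of $E$. For a set of $m$ segments in convex position, the number of maximal non-crossing sub-collections can still be large, but one can bound the number of \emph{triangulations} realizable: here I would invoke a result on the number of triangulations of a planar point set restricted to a given edge set, or more directly bound the number of plane graphs on $n$ convex points with edges drawn from $E$ that triangulate the polygon — this is at most the number of triangulations of the point set using only edges of $E$, which by a theorem on counting triangulations of point sets with forbidden edges is at most $c^{|E|}$ for an absolute constant $c$ (since each triangulation is determined by a subset of $E$ that is ``flip-connected'' and the flip graph has bounded degree). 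Combining $2^{\Omega(n)} = \#\text{types} \le c^{|E|}$ would still only give $|E| = \Omega(n)$.

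To actually reach $n^{2-\varepsilon}$ I would instead quantify the homeomorphism freedom more carefully: the point is that each topological type, being a \emph{tree} with bounded structure, can be embedded into the polygon in only polynomially many essentially different ways relative to a fixed labeling, so realizing $2^{\Omega(n)}$ types forces $E$ to contain $2^{\Omega(n)}$ distinct labelled triangulations, and \emph{then} I bound the number of labelled triangulations inside $E$. A clean way: fix a triangulation type $\tau$; the number of labelled triangulations of $\pi_n$ of type $\tau$ is at most $n!/|\mathrm{Aut}(\tau)|$ but at least, say, $n!/2^{O(n)}$ for ``generic'' balanced trees, whereas all of these together over all types give only $C_{n-2}$ (the Catalan number) labelled triangulations total. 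So instead I count: $E$ realizes all types $\Rightarrow$ for a single type $\tau$, $E$ contains some labelled triangulation of type $\tau$; taking $\tau$ ranging over all $2^{\Omega(n)}$ types and using that distinct types give distinct labelled triangulations, $E$ contains $\ge 2^{\Omega(n)}$ labelled triangulations; but the number of triangulations of the convex $n$-gon using only edges from a fixed set $E$ of diagonals is at most (number of triangulations of the $n$-gon that avoid the $\binom{n}{2}-n-|E|$ forbidden diagonals). The hard step — and the one I'd expect to occupy most of the proof — is showing this last quantity is $2^{o(n^{2-\varepsilon}/\text{something})}$-small unless $|E|$ is large; I would do this by a pigeonhole/entropy argument: if $|E| \le n^{2-\varepsilon}$ then a positive fraction of the $\Theta(n^2)$ possible diagonals are forbidden, and one shows that forbidding an $\varepsilon'$-fraction of diagonals cuts the number of triangulations down to $2^{(1-\delta)\cdot c n}$ — no, that's still linear-exponent. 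The genuinely correct route, which I would commit to, is the one the authors almost surely use: realizing all $2^{\Theta(n)}$ types with an edge set of size $|E|$ means, by a counting argument on how a fixed tree of $n-2$ nodes can be routed, that $|E| \ge 2^{\Omega(n)/(n-3)} \cdot(\text{correction})$ is too weak, so one must iterate: subdivide the polygon into $n^{1-\varepsilon}$ arcs of length $n^\varepsilon$, argue that within the ``scaffold'' each of the $n^{1-\varepsilon}$ local pieces must independently realize exponentially many local patterns, each costing $\Omega(n^{2\varepsilon})$ edges by induction on the sub-polygon, and multiply. I would set up this self-similar/recursive counting — that is where the $n^{2-\varepsilon}$ with the loss of $\varepsilon$ comes from — and the main obstacle is controlling the interaction between the local pieces and the global ``cross'' diagonals so that the per-piece edge budgets genuinely add up rather than being shared.
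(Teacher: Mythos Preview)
Your proposal does not arrive at a proof. You correctly diagnose that the naive counting argument --- $2^{\Theta(n)}$ triangulation types versus $\binom{|E|}{n-3}$ possible subsets of $E$ --- only yields $|E|=\Omega(n)$, and then you cycle through several fixes (bounding the number of triangulations with forbidden diagonals, entropy arguments, a recursive subdivision) without carrying any of them to a conclusion. The recursive scheme at the end is left as a sketch with the ``main obstacle'' explicitly unresolved, so as written the argument has a genuine gap.

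The idea you are missing is both simpler and sharper than anything you tried: do not count triangulations at all, count $\ell$-cycles for a large but fixed $\ell$. A family realizing all triangulations in particular realizes all topological types of non-crossing $\ell$-gons inside $\pi_n$; such a type is determined by the tuple $(k_1,\dots,k_\ell)$ of gap sizes, and there are $\Theta(n^{\ell-1})$ of them. On the other hand, any graph with $|E|$ edges contains at most $O(|E|^{\ell/2})$ cycles of length $\ell$ (the standard extremal bound, which for $\ell=3$ is exactly the Rivin-type triangle count you already know from the previous lemma). Comparing gives $|E|^{\ell/2}\gtrsim n^{\ell-1}$, i.e.\ $|E|=\Omega(n^{2-2/\ell})$, and letting $\ell\to\infty$ yields $\Omega(n^{2-\varepsilon})$ for every $\varepsilon>0$. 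The reason this succeeds where your attempts stall is that $\ell$-cycles form a \emph{polynomial}-sized family of sub-configurations (polynomial of arbitrarily high degree in $n$), matched against a polynomial upper bound in $|E|$; your triangulation count pits two exponential quantities against each other, where the slack is inevitably too large.
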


\begin{proof}
The proof follows the same idea as the proof of Lemma~\ref{L:lbtriangles}. Any family of edges realizing all triangulations defines a graph with $|E|$ edges. Since the family of edges realizes all types of triangulations, in particular it realizes all types of triangles, $4$-cycles, or more generally $\ell$-cycles. The type of an $\ell$-cycle $C$ is determined by the tuple $(k_1, \ldots, k_\ell)$ of vertices in each of the connected components of $\pi_n \setminus C$, and thus there are $\Theta(n^{\ell-1})$ of them. Now, any graph with $|E|$ edges has at most $O(|E|^{\ell/2})$ $\ell$-cycles, and thus $|E|=\Omega(n^{\frac{2\ell-1}{\ell}})$. The result follows by taking $\ell$ arbitrarily big.
\end{proof}

 Lemmas~\ref{L:unlabelledtriangles}, \ref{L:lbtriangles} and \ref{L:lbtriangulations} prove Theorem~\ref{triangless}.

Here again, this proof technique fails for pants decompositions. The best lower bound we can provide is the following.

\begin{lemma}\label{L:lbpantsdec}
Any family of curves realizing all types of pants decompositions of $S_n$ has size at least $\Sum_{i=2}^{\lfloor\frac{n}{2}\rfloor}\lfloor\frac{n}{i}\rfloor=\Omega(n \log n)$.
\end{lemma}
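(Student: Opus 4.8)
The plan is to produce a lower bound by exhibiting, for each value of $i$, many pants decompositions whose realizations force many distinct curves. The natural family to look at consists of pants decompositions built around a single \emph{central} curve that cuts off a subsurface with a fixed small number of punctures, iterated around a cycle. More precisely, for a fixed $i$ with $2 \le i \le \lfloor n/2 \rfloor$, consider pants decompositions that contain a curve $\gamma$ separating $i$ punctures from the remaining $n-i$; up to homeomorphism there are essentially $\lfloor n/i \rfloor$ ``inequivalent ways'' a universal family can be forced to contain such a curve, because after fixing one such curve the $n-i$ remaining punctures still need a curve at distance $i$, and so on. Summing $\lfloor n/i \rfloor$ over $i$ gives the stated bound, and the asymptotics $\sum_{i=2}^{\lfloor n/2\rfloor}\lfloor n/i\rfloor = \Theta(n\log n)$ is the usual harmonic-sum estimate.

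Here is how I would carry it out in detail. First I would set up the combinatorial model: fix a cyclic labelling of the $n$ punctures and, for each pair $i < j$, let $\gamma_{i,j}$ denote a curve enclosing exactly the punctures in the cyclic interval from $i$ to $j-1$, exactly as in the earlier constructions (Figure~\ref{F:standardcurves}). The key observation is that for a specific topological type of pants decomposition — one whose intersection tree has a particular shape — the realizing homeomorphism maps its curves to curves of $\Gamma$, and the topological type \emph{rigidly} determines, up to the action of the homeomorphism, how many punctures lie on each side of each curve; in particular a curve cutting off $i$ punctures must go to a curve in $\Gamma$ cutting off exactly $i$ punctures (on one side). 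So the core of the argument is: construct, for each $i$, a collection of $\lfloor n/i \rfloor$ pants decompositions (or a single decomposition containing $\lfloor n/i\rfloor$ disjoint curves each cutting off $i$ punctures in an ``incompatible'' cyclic pattern) whose realization forces $\Gamma$ to contain at least $\lfloor n/i \rfloor$ pairwise non-homeomorphic-to-each-other curves with the $i$-splitting property, and argue that across different values of $i$ these forced curves are all distinct (a curve cutting off $i$ punctures cannot simultaneously cut off $i'$ punctures for $i \ne i'$ with $i, i' \le n/2$, so the counts stack additively). Then $|\Gamma| \ge \sum_{i=2}^{\lfloor n/2\rfloor} \lfloor n/i \rfloor$.

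The cleanest concrete realization I would use: take the ``necklace'' pants decomposition where a chain of $\lfloor n/i\rfloor$ pairs of pants are glued in a cycle, the $k$-th one cutting off roughly $i$ consecutive punctures; inside one fixed surface $S_n$ this single decomposition already contains $\lfloor n/i \rfloor$ curves each of which, under any realizing homeomorphism into $\Gamma$, must map to a curve of $\Gamma$ that separates $i$ punctures from $n-i$ — and since these $\lfloor n/i\rfloor$ curves are pairwise disjoint and ``non-parallel'' (they cut the surface into distinct pieces), their images in $\Gamma$ are pairwise distinct. That gives $\ge \lfloor n/i \rfloor$ curves of $\Gamma$ with the ``$i$-type'' for each $i$, and then one checks a curve has a well-defined splitting number so the bound over all $i$ is a sum rather than a max. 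Finally, $\sum_{i=2}^{\lfloor n/2\rfloor}\lfloor n/i\rfloor \ge \sum_{i=2}^{\lfloor n/2\rfloor}(n/i - 1) = n(H_{\lfloor n/2\rfloor} - 1) - (\lfloor n/2\rfloor - 1) = \Omega(n\log n)$.

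The main obstacle, and the step that needs the most care, is the rigidity claim: making precise that a universal family $\Gamma$ realizing \emph{all} types of pants decompositions must, for the necklace decomposition, contain $\lfloor n/i\rfloor$ genuinely distinct curves — as opposed to reusing one curve of $\Gamma$ for several curves of the decomposition via cleverness in the homeomorphism. Two distinct curves of a single pants decomposition are disjoint and non-isotopic, hence their images under a homeomorphism are disjoint and non-isotopic, hence distinct elements of $\Gamma$; that settles ``distinct within one $i$.'' The subtler point is additivity across $i$: a single curve $\gamma \in \Gamma$ separates $S_n$ into two pieces with some fixed numbers of punctures $\{t, n-t\}$, and we are assigning to each forced curve its splitting parameter $\min(t, n-t)$, which takes a single value; so the sets of curves forced for different $i$ are disjoint, and the total is $\sum_i \lfloor n/i\rfloor$. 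I would write this bookkeeping out carefully, since it is exactly where a naive argument would only get $\max_i \lfloor n/i \rfloor = \Theta(n)$ instead of the full $\Theta(n\log n)$.
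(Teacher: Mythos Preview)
Your approach is essentially the same as the paper's: for each $i$ exhibit a single pants decomposition containing $\lfloor n/i\rfloor$ curves that each cut off exactly $i$ punctures, observe that a homeomorphism sends these to $\lfloor n/i\rfloor$ distinct curves of $\Gamma$, and then use that the splitting parameter $\min(t,n-t)$ is a well-defined invariant of a curve to make the counts additive across $i$. The one slip is your ``necklace'' description: on $S_n$ the dual graph of any pants decomposition is a \emph{tree}, so ``pairs of pants glued in a cycle'' does not exist here; what you actually want (and what the paper builds explicitly) is a trivalent tree obtained by taking any trivalent tree on $\lceil n/i\rceil$ leaves and grafting a subtree with $i$ leaves onto each leaf (handling the remainder separately), which yields $\lfloor n/i\rfloor$ edges each separating off exactly $i$ leaves.
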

\begin{proof}
Let $T$ be a trivalent tree with $n$ leaves associated to a pants decomposition $P$ of $S_n$, in which each internal vertex corresponds to a pair of pants and each edge corresponds to a closed curve in $P$. For each curve $\gamma$ in $P$, we say that $\gamma$ \textbf{bounds} $i\leq \lfloor\frac{n}{2}\rfloor$ punctures if its corresponding edge in $T$ separates a sub-tree that has exactly $i$ leaves. We prove the following claim.

\textbf{Claim.} For each $i\leq\lfloor\frac{n}{2}\rfloor$, there exists a trivalent tree $T_i$ with $n$ leaves such that its corresponding pants decomposition contains $\lfloor\frac{n}{i}\rfloor$ curves, each of which bounds $i$ punctures.

Assuming the claim for now, for each $i\leq\lfloor\frac{n}{2}\rfloor$, the existence of such a tree implies that to realize the pants decomposition corresponding to this tree,   $\lfloor\frac{n}{i}\rfloor$ closed curves that bound $i$ punctures are needed. This implies that at least  $\Sum_{i=2}^{\lfloor\frac{n}{2}\rfloor}\lfloor\frac{n}{i}\rfloor$ closed curves are needed to realize $T_i$s for $2\leq i\leq \lfloor\frac{n}{2}\rfloor$ and proves the lemma.

\emph{Proof of the claim.} 
In a rooted tree, we say that a vertex is internal if it is not the root and not a leaf. Let $B_i$ be any trivalent tree with $\lceil\frac{n}{i}\rceil$ leaves and let $L_i$ be a rooted tree with $i$ leaves in which the root has degree 2 and the internal vertices have degree 3. Let $R_i$ be a rooted tree with $n-i\lfloor\frac{n}{i}\rfloor$ leaves in which the root has degree 2 and the internal vertices all have degree 3. We define a tree $T_i$ as follows: in the case where $i$ divides $n$, we paste a copy of $L_i$ on each leaf of $B_i$ by identifying the root in $L_i$ with a leaf in $B_i$. If $i$ does not divide $n$, we paste a copy of $L_i$ on every leaf of $B_i$ except one, and we paste $R_i$ on the last leaf. We can see that $T_i$ is a trivalent tree with $n$ leaves. Each edge in $T_i$ that corresponds to an edge that used to connect a leaf in $B_i$ separates $i$ leaves in $T_i$ and therefore its corresponding curve in the pants decomposition bounds $i$ punctures. We refer to Figure~\ref{jjj} for an illustration.
\begin{figure}[ht]
    \centering
    \includegraphics[width=0.73\textwidth]{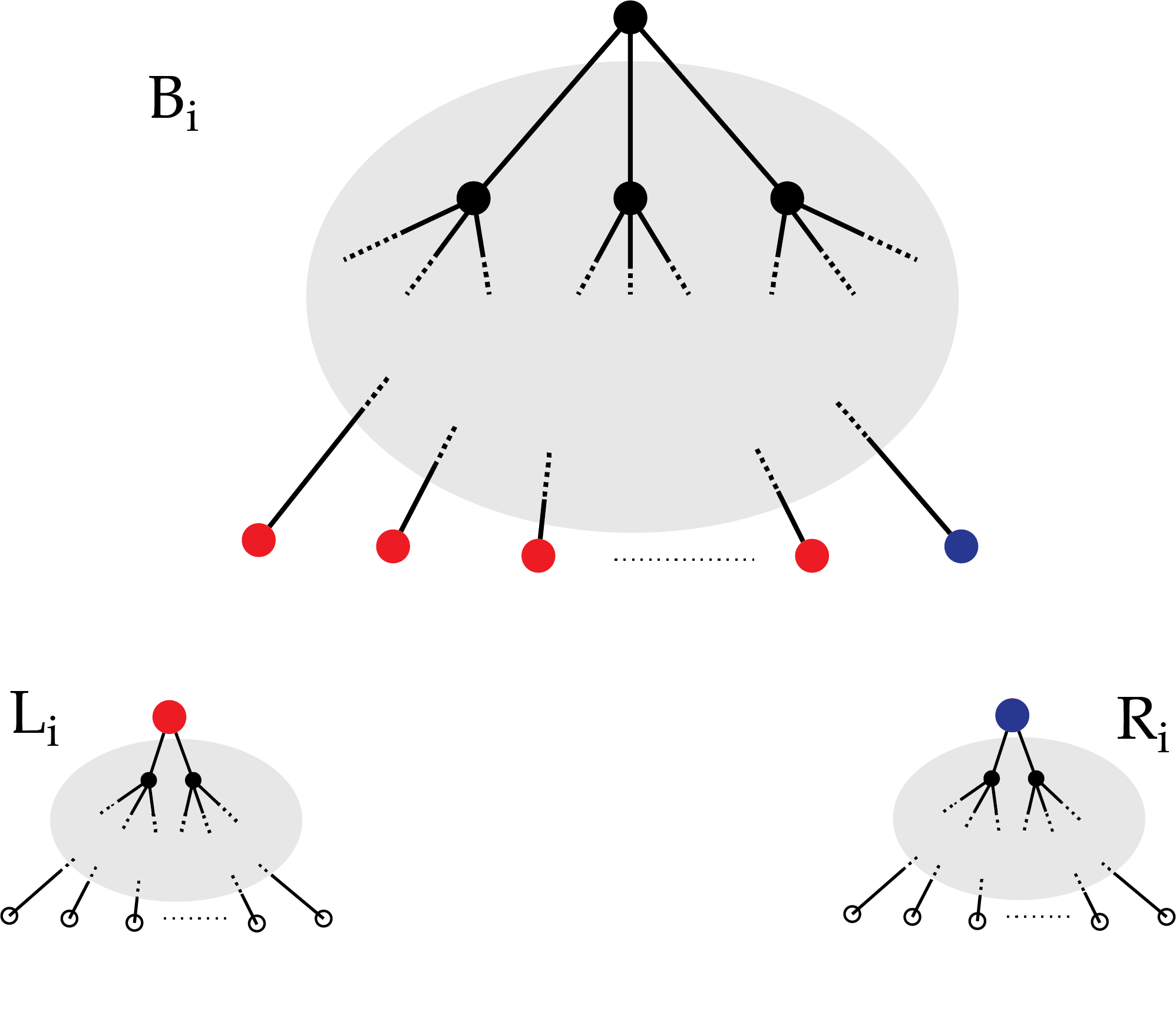}
    \caption{The case where $i$ does not divide $n$. $B_i$ has $\lceil\frac{n}{i}\rceil$ leaves, $L_i$ has $i$ leaves and $R_i$ has $n-i\cdot\lfloor \frac{n}{i}\rfloor $ leaves. 
}
    \label{jjj}
\end{figure}
\end{proof}

The construction at the beginning of Section~\ref{3.2} and Lemma~\ref{L:lbpantsdec} proves the second item of the Theorem~\ref{puncture}.

\section{Labelled punctures}\label{sec:labpunctures}

In this section, we turn our attention towards families of curves realizing all types of pants decompositions for spheres with labelled punctures. The dual graph to a pants decomposition $P$ of $S_n$ with labelled punctures is a trivalent tree $T$ with $n$ labelled leaves. We call an edge in a tree $T$ an internal edge if it is not adjacent to a leaf. The following lemma provides a family of curves of exponential size realizing all types of pants decompositions in this setting.

\begin{lemma}\label{lem:labelled sphere}
Let $S_n$ be the sphere with $n$ labelled punctures. There exists a family of simple closed curves of size less than $3^{n-1}$ that realizes all types of pants decompositions of $S_n$ up to labelled homeomorphism.
\end{lemma}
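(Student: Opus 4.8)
The plan is to build the family $\Gamma$ recursively, mimicking the recursive structure of trivalent trees with labelled leaves. Fix the $n$ labelled punctures. A pants decomposition of $S_n$ corresponds to a trivalent tree $T$ with leaves labelled $\{1,\dots,n\}$; pick any edge $e$ of $T$ adjacent to the leaf labelled $n$ (or rather, look at the curve "closest" to puncture $n$). Cutting along the curve adjacent to puncture $n$ splits off a pair of pants containing puncture $n$ and two other "sides"; iterating, one sees that every curve in the decomposition separates the punctures into two groups, i.e. corresponds to a subset $A \subseteq \{1,\dots,n\}$ with $2 \le |A| \le n-2$ (and $A$ and its complement give the same curve). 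The key point is that I only need, for each such subset $A$, a \emph{bounded number} of curves realizing "the curve separating $A$ from its complement", chosen coherently so that nested/disjoint subsets give disjoint curves.

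Concretely, I would fix once and for all a reference embedding of $S_n$ with the punctures in convex position, and define for each subset $A$ a standard curve $\gamma_A$ enclosing exactly the punctures of $A$, where these standard curves are chosen to be pairwise disjoint whenever the corresponding subsets are nested or disjoint (this is the labelled analogue of the $\gamma_{i,j}$ construction in the proof of Lemma~\ref{L:unlabelledpants}, except now we index by arbitrary subsets rather than intervals). Take $\Gamma := \{\gamma_A : A \subseteq \{1,\dots,n\},\ 2 \le |A| \le n-2\}$, together with possibly a few auxiliary curves. The number of such subsets is $2^n - 2\binom{n}{1} - \binom{n}{0}\cdot 2$-ish, but since $A$ and $A^c$ give the same curve we get roughly $2^{n-1}$; one checks the exact count is at most $3^{n-1}$ comfortably (indeed $2^{n-1} < 3^{n-1}$). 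Given any pants decomposition $P$, each of its curves separates the punctures into two sets, hence is isotopic, \emph{after applying a homeomorphism fixing the punctures}, to the corresponding $\gamma_A$; the subtlety is that we must realize \emph{all} the curves of $P$ simultaneously by disjoint curves from $\Gamma$.

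The main step, and the main obstacle, is precisely this simultaneity: showing that for every pants decomposition $P$ there is a homeomorphism of $S_n$ fixing punctures pointwise that sends $P$ into $\Gamma$. For this I would argue by induction on $n$ (or on the number of pants): the curves of $P$ form a laminar family of subsets of $\{1,\dots,n\}$ (any two are nested or disjoint, since the curves are disjoint); a theorem of the "change of coordinates principle" type (see \cite{farb2011primer}) says that any two multicurves realizing the same laminar family of separations are related by a homeomorphism fixing the punctures. Since our standard curves $\gamma_A$ were chosen pairwise disjoint exactly for nested-or-disjoint $A$, the family $\{\gamma_A : A \in \mathcal{L}\}$ for the laminar family $\mathcal{L}$ coming from $P$ is a sub-multicurve of $\Gamma$ realizing the same combinatorics as $P$, and the change-of-coordinates homeomorphism carries $P$ onto it. The bound $|\Gamma| \le 3^{n-1}$ then follows from counting subsets, and I would devote a sentence to checking that the "closure" conditions needed to make all the $\gamma_A$ simultaneously consistent do not blow up the count. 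This proves the lemma and, combined with the earlier reduction, feeds into the upper bound of Theorem~\ref{maintheorem}.
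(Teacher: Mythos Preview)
Your argument hinges on a single unproven assertion: that one can choose, for every subset $A \subseteq [n]$ with $2 \le |A| \le n-2$, a \emph{single} simple closed curve $\gamma_A$ separating $A$ from its complement, in such a way that $\gamma_A$ and $\gamma_B$ are disjoint whenever $A$ and $B$ are nested or disjoint. You describe this as ``the labelled analogue'' of the interval construction, but it is precisely here that the analogy breaks down. For intervals $[i,j]$, nested-or-disjoint subsets automatically give nested-or-disjoint curves; for arbitrary subsets there is no such canonical placement, and the curve $\gamma_A$ must make a choice of how to weave around each puncture lying between $\min A$ and $\max A$ but not in $A$. The paper's proof addresses exactly this issue: for each subset $S$ it introduces not one curve but an entire family $\Gamma_S = \{\gamma_S^f\}$ indexed by maps $f : [n]\setminus S \to \{\text{above},\text{below}\}$, and then shows by an induction on the tree that for any given laminar family one can select above/below data \emph{consistently across all the curves of that particular tree}. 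Different trees require different choices. That consistency argument is the entire content of the paper's induction, and it is why the final count is of order $3^{n}$ rather than $2^{n}$.

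To see concretely why one fixed choice per subset is problematic, take $n=5$ and the five bipartitions $\{1,3\},\{2,4\},\{3,5\},\{1,4\},\{2,5\}$, which form a $5$-cycle in the compatibility graph (each consecutive pair is disjoint as sets). Tracking the above/below constraints around this cycle --- e.g.\ disjointness of $\gamma_{\{1,3\}}$ and $\gamma_{\{2,4\}}$ forces opposite choices at punctures $2$ and $3$, whereas nesting constraints such as $\gamma_{\{1,3\}}\subset\gamma_{\{2,5\}}$ force equal choices --- one arrives at a parity contradiction around the odd cycle. This is not a full impossibility proof, since there are isotopy classes beyond those captured by above/below data, but it shows that your assertion is far from a routine verification. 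Indeed, if your one-curve-per-subset claim were correct, your family would have size $2^{n-1}-n-1$, matching the lower bound of Lemma~\ref{lem:lowerbound} exactly; the paper explicitly flags closing the gap between $2^{n-1}$ and $3^{n-1}$ as an open problem, so the step you are glossing over is precisely the heart of the matter.
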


\begin{proof}

Let $P$ be a pants decomposition of $S_n$ and $T$ be the tree dual to $P$. Each internal edge of the tree separates the leaves into two parts and it corresponds to a simple closed curve on $S_n$ that separates the corresponding punctures. Here, we deal with a bigger set of trees than those that are dual to pants decompositions. Denote by $\T_n$ the set of all trees with $n$ leaves and with every internal vertex having degree at most 3.
In this proof, we deal with free homotopy classes of curves. Two homotopy classes of curves are said to be disjoint if there exists a representative in each class such that these curves do not intersect each other.

We remove a point from the sphere and consider the punctures on the plane, which we number arbitrarily from $1$ to $n$ and consider lined up from $1$ to $n$. For a subset $S\subset [n]$, and for each map $f:[n] \setminus S \rightarrow \{above,below\}$, we define a homotopy class as follows: the curve $\gamma_S^f$ encompasses the punctures in $S$, and for each puncture $b$ not in $S$ but within $[\min S,\max S]$, it goes above, respectively below, $b$, depending on the value of $f(b)$. These homotopy classes are pictured in Figure~\ref{Fig:wiggle}, for $S=\{2,6\}$ and the four possible maps $f$ choosing above or below for the punctures labelled $3$, $4$ and $5$. Note that for punctures not in $S$ and not between the smallest and the largest element of $S$, there is no choice of "above" or "below" to be made, hence no need to define $f$. Then we denote by $\Gamma_S$ the union of all the curves $\gamma_S^f$ for all the possible maps $f$.

Let $T\in \T_n$ and fix a vertex $v$ to be the root of $T$. We say that a set $\Phi_T$ of homotopy classes of curves recognizes the tree $T$ with respect to $v$ if it satisfies the following properties. The homotopy classes in $\Phi_T$ are pairwise disjoint. If $e$ is an internal edge in $T$ that separates a sub-tree that does not contain the vertex $v$ and that has $S$ as leaves, there exists a homotopy class of curves in $\Phi_T$ that encompasses the punctures in $S$ (intuitively, a virtual leaf attached to the root should correspond to the point at infinity).
We say that a set of homotopy classes of curves recognizes a tree $T$ if for any vertex $v$ in $T$, it recognizes $T$ with respect to $v$.
Then, in order to realize the pants decomposition corresponding to $T$, it is enough to choose a curve from each homotopy class of curves in $\Phi_T$ such that these curves are pairwise disjoint.  

\begin{figure}[ht]
    \centering
    \includegraphics[width=\textwidth]{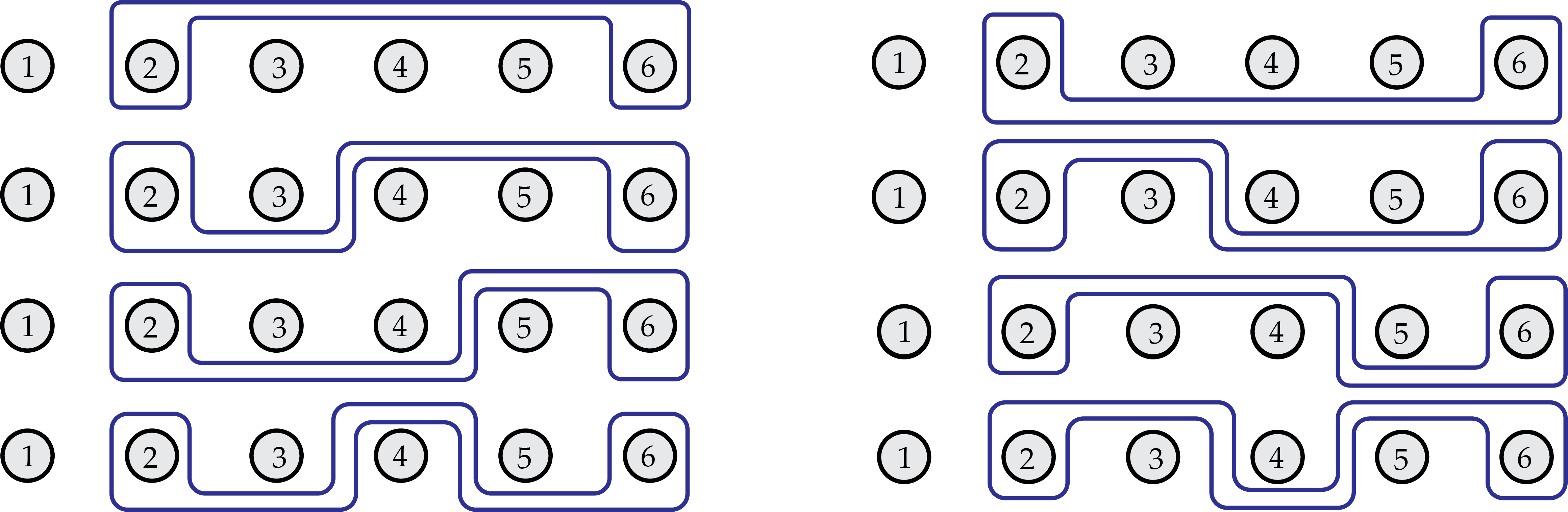}
    \caption{Eight possible choices for the curve that separates $\{2,6\}$ from $\{1,3,4,5\}$}
    
    \label{Fig:wiggle}
\end{figure}

Let $\Lambda_{[n]}=\bigcup_{S\subset [n], 1<|S|\leq n-2} \Gamma_S$. We first prove that each tree $T\in\T_n$ can be recognized by a set of homotopy classes in $\Lambda_{[n]}$ and then we show that  families of curves can be chosen from each homotopy class in $\Lambda_{[n]}$ such that they realize all types of pants decompositions of $S_n$.

We prove the following claim.

 \textbf{Claim.} The set of homotopy classes $\Lambda_{[n]}$ is enough to recognize all trees in $\T_n$.

 \emph{Proof of the claim:} The proof of the claim is by induction on $n$.
Pick a tree $T\in\T_n$ with a root $v$. 
Let $S_1,S_2$ and $S_3$ be the set of leaves that belong to each branch of $v$.
We need to recognize the edges adjacent to $v$ that are internal edges in the tree. 
If all the adjacent edges of $v$ are internal edges, then they correspond to three curves that separate the punctures in $S_1,S_2$ and $S_3$ from each other. For $S_1$ (resp. for $S_2$) choose the homotopy class of curves $\gamma_1$ (resp. $\gamma_2$) in $\Gamma_{S_1}$ (resp. $\Gamma_{S_2}$) that go above (resp. below) the punctures in $[n]\setminus S_1$ (resp. $[n]\setminus S_2$). For $S_3$, choose the homotopy class of curves $\gamma_3$ that go below the punctures in $S_1$ and above the punctures in $S_2$, see Figure~\ref{Fig:root}. The choice of above and below guarantees that these three homotopy classes are pairwise disjoint. 
\begin{figure}[H]
    \centering
    \includegraphics[width=.85\textwidth]{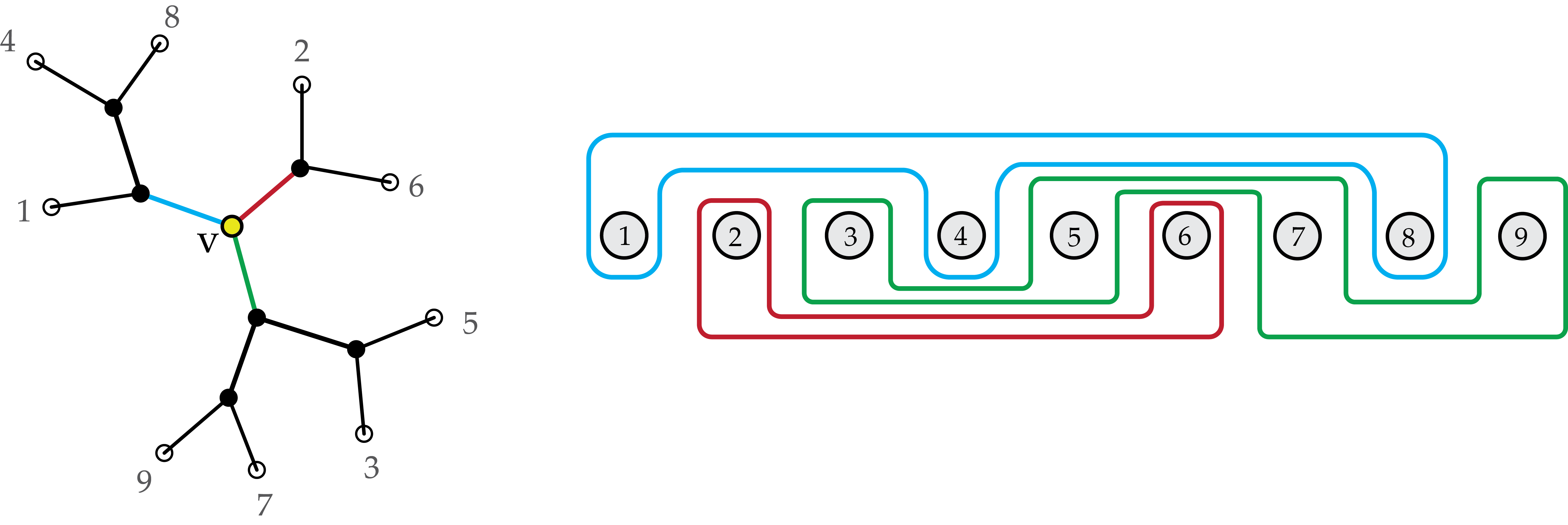}
    \caption{The curves realizing the edges adjacent to the root. Here $S_1=\{1,4,8\}$, $S_2=\{2,6\}$ and $S_3=\{3,5,7,9\}$.}
    \label{Fig:root}
\end{figure}

We denote by $T \setminus \{v\}$ the tree $T$ where we remove $v$ and the three adjacent edges. Let $T_i$ denote the sub-tree in $T\setminus \{v\}$ that corresponds to $S_i$. Let $v_i$ be the vertex in $T_i$ that used to be connected to $v$ and let it be the root of $T_i$; $v_i$ has degree 2. The number of leaves in $T_i$ for $1\leq i\leq 3$ is less than $n$ and therefore, by the induction hypothesis, we can recognize the edges in $T_i$ by a set $\Phi_{T_i}$ of pairwise disjoint
homotopy classes of curves in $\Lambda_{[S_i]}$ with respect to $v_i$. 
We can consider these homotopy classes as belonging to $\Lambda_{[n]}$ by making the homotopy classes in $\Phi_{T_i}$ go above or below the punctures in $S_i\setminus S$ in the same way that $\gamma_i$ does. By the way we chose the homotopy classes, they are pairwise disjoint and therefore they recognize the tree $T$. This finishes the proof of the claim.
 $\hfill\blacksquare$

Now, to realize the pants decomposition corresponding to a tree $T\in\T_n$, it suffices to choose a curve from each homotopy class in $\Phi_T$, such that these curves are pairwise disjoint. Note that by construction, these homotopy classes are pairwise disjoint. Thus, in order to realize all the types of pants decompositions, it suffices to pick a closed curve for each homotopy class in $\Lambda_{[n]}$ so that the resulting family is in minimal position (one way to do that is for example to fix an arbitrary hyperbolic metric on $S_n$ and pick geodesic representatives for each representative). 

We conclude the proof by upper bounding the size of the set $\Lambda_{[n]}$. Recall that for a set $S\subset [n]$, if $i\notin [\min(S),\max(S)]$ then the curves in $\Gamma_S$ do not need to go above or below the puncture $i$. Thus we have the following bound.
\begin{align*}
    |\Lambda_{[n]}|=\Sum_{S\subset [n], 1<|S|< n-1} |\Gamma_S|&=\Sum_{i=2}^{n-2}\quad \Sum_{k=0}^{n-i} (n-k-i+1)\binom{k+i-2}{i-2}2^k
    \\&=\Sum_{k=0}^{n-2} \quad\Sum_{i=k}^{n-2} (i-k+1) \binom{n+k-i-2}{k}2^k
    \\&=\Sum_{k=0}^{n-2}  2^k \left(\Sum_{j=1}^{n-k-1} j\binom{n-j-1}{k}\right)
    \\&=\Sum_{k=0}^{n-2} 2^k\binom{n}{k+2}
    \\&=\frac{1}{4}(3^n-2n-1)
\end{align*}
Note that in the second equation, for a subset $S$ of size $i$, $k$ counts the number of punctures not in $S$ that belong to $[\min(S),\max(S)]$ and $2^k$ counts the choices of going below or above these $k$ punctures. The term $(n-k-i+1)$ counts the number of cases for $[\min(S),\max(S)]$ in this case, i.e, the number of cases where $\max(S)-\min(S)= i+k-1$.
\end{proof}

The following lemma provides an easy exponential lower bound for this problem.

\begin{lemma}\label{lem:lowerbound}
Any family of curves realizing all types of pants decompositions of $S_n$ with labelled punctures has size at least $2^{n-1}-n-1$.
\end{lemma}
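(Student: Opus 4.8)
The plan is to exhibit, for each subset $S \subseteq [n]$ with $2 \le |S| \le n-2$, a pants decomposition of $S_n$ that contains a curve separating exactly the punctures of $S$ from the rest, and moreover to do this in such a way that a single family of curves realizing all these pants decompositions (up to labelled homeomorphism) must contain, for each such $S$, at least one curve isotopic to the one separating $S$. Since there are $\sum_{k=2}^{n-2}\binom{n}{k} = 2^n - 2 - 2n$ such subsets (counting both $S$ and its complement, which give the same separating curve), this will force the family to have size at least $\tfrac12(2^n - 2 - 2n) = 2^{n-1} - n - 1$.

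The key point is that in the \emph{labelled} setting, a simple closed curve on $S_n$ that separates the punctures into two groups $A$ and $[n]\setminus A$ has a topological type (up to homeomorphism fixing punctures pointwise) recorded at least by the partition $\{A, [n]\setminus A\}$: a labelled homeomorphism sending one such curve to another must preserve which punctures lie on which side. So the first step is to observe that distinct unordered partitions $\{A, [n]\setminus A\}$ give curves in distinct orbits under the labelled mapping class group, hence no single curve in $\Gamma$ can be used to realize the "$A$-separating curve" for two different partitions. The second step is to check that for \emph{every} such $A$ with $2 \le |A| \le n-2$ there genuinely exists a pants decomposition of $S_n$ one of whose curves separates $A$ from $[n]\setminus A$: simply build the dual trivalent tree by taking any trivalent tree structure on the $|A|$ leaves labelled by $A$, any trivalent tree on the $|[n]\setminus A|$ leaves, and joining the two roots by an edge (subdividing as needed so all internal vertices are trivalent); the constraint $2 \le |A|, |[n]\setminus A|$ is exactly what allows both sides to be grown into valid sub-pants-decompositions. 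Then $\Gamma$, realizing all types of pants decompositions, must in particular realize this one, so it contains a curve separating $A$; combined with step one, this gives the bound by summing over the $2^{n-1}-n-1$ partitions.

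I would carry out the steps in that order: (i) state and justify that the labelled-homeomorphism orbit of a separating curve determines its partition; (ii) produce, for each admissible $A$, a pants decomposition containing an $A$-separating curve; (iii) conclude by a counting argument that $|\Gamma| \ge 2^{n-1}-n-1$.

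The main obstacle is step (i): one must be slightly careful that "realizing a pants decomposition up to labelled homeomorphism" still pins down the partition of each of its curves — i.e. that a labelled homeomorphism carrying one pants decomposition to (a sub-collection of) $\Gamma$ matches the $A$-separating curve to a curve of $\Gamma$ whose two sides contain precisely the punctures $A$ and $[n]\setminus A$. This is where the labelling is essential (in the unlabelled case the partition is only determined up to its two sizes, which is why only the much weaker $\Omega(n\log n)$ bound of Lemma~\ref{L:lbpantsdec} is available there), but with punctures fixed pointwise it follows directly from the fact that a homeomorphism permutes the complementary components of a curve together with their puncture sets. Everything else is routine combinatorics on trivalent trees.
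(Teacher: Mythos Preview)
Your proposal is correct and follows essentially the same approach as the paper: the paper's proof is the two-line observation that every subset $S$ with $2\le |S|\le n-2$ appears as the partition induced by some curve in some pants decomposition, and since labelled homeomorphisms preserve this partition, one needs at least $|\{\{S,[n]\setminus S\}:2\le |S|\le n-2\}|=2^{n-1}-n-1$ curves. Your steps (i)--(iii) spell this out in more detail, but the underlying idea and the count are identical.
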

\begin{proof}
We know that for any subset $S$ of the punctures such that  $2\leq|S|\leq n-2$, there exists a closed curve in some pants decomposition that separates the punctures in $S$ from those in $[n]\setminus S$. Therefore we need at least $|\{S,[n]\setminus S, 2\leq|S|\leq n-2\}|$ closed curves to realize all types of pants decompositions of $S_n$ with labelled punctures, which is equal to $2^{n-1}-n-1$.
\end{proof}

Bridging the gap between the $3^{n-1}$ upper bound and the $2^{n-1}$ lower bound seems to be an interesting open problem as well.

 Lemmas~\ref{lem:labelled sphere} and \ref{lem:lowerbound} prove the first item of the Theorem~\ref{puncture}.

\begin{remark}
All the bounds in this section apply equally well to the problem of realizing all  types of \emph{pants} (instead of types pants decompositions), and both the upper and lower bounds are also the best ones we know for that problem.
\end{remark}

\section{Surfaces without punctures}\label{sec:genus}

An easy reduction to the case with labelled punctures, in conjunction with Lemma~\ref{lem:labelled sphere}, directly yields the following bound.

\begin{lemma}\label{lem:allpants}
On the surface $\Sigma_g$, there exists a family of curves of size at most $3^{2g-1}$ realizing all types of pants decompositions.
\end{lemma}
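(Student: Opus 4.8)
The plan is to reduce the closed genus-$g$ case to the labelled punctured-sphere case and then invoke Lemma~\ref{lem:labelled sphere}. First I would fix a handle decomposition (or equivalently a non-separating multicurve) exhibiting $\Sigma_g$ as a sphere with $2g$ boundary circles glued in pairs: concretely, choose $g$ disjoint non-separating simple closed curves $c_1,\dots,c_g$ on $\Sigma_g$ whose union is non-separating, and such that cutting along all of them yields a sphere $S$ with $2g$ boundary components $b_1^+,b_1^-,\dots,b_g^+,b_g^-$, where $b_i^\pm$ are the two copies of $c_i$. Collapsing each boundary circle to a puncture turns $S$ into the sphere $S_{2g}$ with $2g$ labelled punctures, labelled so that puncture $p_i^+$ and puncture $p_i^-$ correspond to the two sides of $c_i$. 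This is the ``easy reduction'' alluded to in the statement.

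The key observation is that any pants decomposition $Q$ of $\Sigma_g$ can, after an ambient homeomorphism, be assumed to contain the curves $c_1,\dots,c_g$ (every pants decomposition extends a given multicurve, and all maximal non-separating $g$-component multicurves are equivalent under the mapping class group). Once $Q \supseteq \{c_1,\dots,c_g\}$, the remaining $2g-3$ curves of $Q$ lie in $\Sigma_g \setminus (c_1 \cup \dots \cup c_g) = S$, and they form a pants decomposition of $S_{2g}$ (viewing boundary circles as punctures), with the mild caveat that the identification $b_i^+ \sim b_i^-$ imposes no constraint on these interior curves. Hence a family $\Gamma_0$ of curves on $S_{2g}$ that realizes all types of pants decompositions of the $2g$-punctured sphere with labelled punctures, pushed forward into $\Sigma_g$ via the gluing (curves meeting a puncture $p_i^\pm$ are simply discarded, as pants decomposition curves of $S_{2g}$ are disjoint from the punctures), together with the fixed curves $c_1,\dots,c_g$, realizes every pants decomposition of $\Sigma_g$ that contains $\{c_1,\dots,c_g\}$ — and by the previous sentence, up to homeomorphism that is all of them.

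It remains to count. By Lemma~\ref{lem:labelled sphere} applied with $n = 2g$, there is such a family $\Gamma_0$ with $|\Gamma_0| < 3^{2g-1}$. Adding the $g$ curves $c_1,\dots,c_g$ gives a universal family of size at most $3^{2g-1} + g$; but in fact the curves $c_i$ are already themselves of the form ``a curve separating the puncture set $\{p_i^+\}$ from the rest'' is not needed — rather, one observes that the bound $3^{2g-1}$ in Lemma~\ref{lem:labelled sphere} is $\tfrac14(3^{2g}-4g-1)$, which for $g\geq 1$ leaves ample slack to absorb the extra $g$ curves, so the total is still at most $3^{2g-1}$. (Alternatively, since the $c_i$ are disjoint from all curves in $\Gamma_0$, one can simply re-run the accounting in the proof of Lemma~\ref{lem:labelled sphere} to see the strict inequality $|\Lambda_{[2g]}| < 3^{2g-1}$ has room for $g$ more.) The main obstacle is making precise the claim that every pants decomposition of $\Sigma_g$ may be assumed to contain $\{c_1,\dots,c_g\}$: this uses the change-of-coordinates principle for surfaces (any two multicurves with homeomorphic complements and matching boundary data are related by a homeomorphism), so one must check that the complement of a maximal non-separating system is always a sphere with $2g$ boundary components, which is a standard Euler-characteristic-plus-connectedness argument.
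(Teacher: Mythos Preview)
Your proposal is correct and follows essentially the same approach as the paper: fix a cut system of $g$ curves planarizing $\Sigma_g$, observe that every pants decomposition contains such a system (so after a homeomorphism may be assumed to contain the fixed one), apply Lemma~\ref{lem:labelled sphere} with $n=2g$ to the resulting labelled $2g$-punctured sphere, and use the sharper bound $\tfrac14(3^{2g}-4g-1)$ from that lemma to absorb the extra $g$ curves. The only wrinkle is that your parenthetical justification (``every pants decomposition extends a given multicurve, and all maximal non-separating $g$-component multicurves are equivalent'') is slightly garbled---what is actually needed is that every pants decomposition \emph{contains} a cut system (take the $g$ curves corresponding to edges outside a spanning tree of the dual trivalent graph) and that any two cut systems are related by a homeomorphism; the paper simply asserts the first fact without proof as well.
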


\begin{proof}
The family consists of taking $g$ simple disjoint closed curves such that cutting along them yields a sphere with $2g$ punctures. Then we use the family of curves given by Lemma~\ref{lem:labelled sphere}. This works because any pants decomposition contains $g$ simple disjoint closed curves such that cutting along them yields a sphere with $2g$ punctures. So we can realize these curves using our cutting curves. After cutting along these $g$ curves, the resulting pants decomposition is one on the sphere with $2g$ punctures, which can be labelled arbitrarily, and thus can be realized using a subset of the curves given by Lemma~\ref{lem:labelled sphere}. The labeling will in particular preserve the matching stipulating how to glue back the punctures, and thus our curves will indeed realize the targeted pants decomposition. 

Since the curves in Lemma~\ref{lem:labelled sphere} are of size at most $3^{2g-1}-g$ (we refer to the proof of that lemma which provides a bound that is slightly sharper than in the statement of the lemma), the resulting family has size at most $3^{2g-1}$.
\end{proof}

The technique in this proof can readily be adapted to provide a singly-exponentially sized family of curves realizing all types of pants decompositions on surfaces of genus $g$ with $n$ punctures.

The reduction in the proof of Lemma~\ref{lem:allpants} is clearly wasteful, in that we use a family of curves tailored to realize the pants decompositions for labelled punctures, but actually it would suffice to consider only pants decompositions up to homeomorphism which preserves the matching induced by the cutting curves. Due to this inefficiency, the exponential lower bound of Lemma~\ref{lem:lowerbound} does not translate to this setting, and we can only prove the following lower bound obtained with a counting argument.

\begin{lemma}\label{lem:lowerboundallpants}
For any $\varepsilon>0$, any family of simple closed curves realizing all types of pants decompositions on $\Sigma_g$ has size $\Omega(g^{4/3-\varepsilon})$.
\end{lemma}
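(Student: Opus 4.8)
The plan is to prove this by a direct double‑counting argument, comparing the number of homeomorphism types of pants decompositions of $\Sigma_g$ against the number of sub‑collections of $\Gamma$ that could conceivably witness them. First I would recall, as already noted in the introduction, that a pants decomposition of $\Sigma_g$ cuts the surface into $2g-2$ pairs of pants and that its homeomorphism type is precisely an isomorphism class of connected trivalent (cubic) multigraph on $2g-2$ vertices (the dual graph). By the asymptotic enumeration of regular multigraphs (Bollob\'as, \cite{bollobas}), the number $T$ of such isomorphism classes is $g^{(1+o(1))g}$; for our purposes only the lower bound $T \ge g^{(1-o(1))g}$ is needed.

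Next I would set up the other side of the inequality. If $\Gamma$ realizes all types of pants decompositions, then for each of the $T$ types there is a sub‑family of exactly $3g-3$ curves of $\Gamma$ forming a pants decomposition of that type; conversely a fixed $(3g-3)$‑element subset of $\Gamma$ gives rise to at most one homeomorphism type (either its curves form a pants decomposition, with a well‑defined type, or they do not). Hence the assignment sending a type to a witnessing subset is injective, so $T \le \binom{|\Gamma|}{3g-3}$. Using $\binom{|\Gamma|}{3g-3} \le \bigl(e|\Gamma|/(3g-3)\bigr)^{3g-3}$ and taking $(3g-3)$‑th roots yields $|\Gamma| \ge \frac{3g-3}{e}\, T^{1/(3g-3)} \ge \frac{3g-3}{e}\, g^{(1-o(1))g/(3g-3)} = \frac{3g-3}{e}\, g^{1/3 - o(1)}$, which is $\Omega(g^{4/3-\varepsilon})$ for every fixed $\varepsilon>0$ once $g$ is large enough.

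The step I expect to be the delicate one is the quantitative input on $T$: the argument would only give $\Omega(g^{1+c/3})$ from a bound of the shape $T \ge 2^{cg\log g}$, so reaching the exponent $4/3$ forces $c$ to be essentially the sharp value $1$, i.e.\ we genuinely need $T = g^{(1-o(1))g}$ and not merely $T = g^{\Omega(g)}$. Thus the proof hinges on the precise growth rate of the number of cubic multigraphs rather than on any finer topology of surfaces; indeed, cruder combinatorial substitutes (for instance bounding the number of maximal cliques in the disjointness graph of $\Gamma$ by $3^{|\Gamma|/3}$ \`a la Moon--Moser) would only yield $\Omega(g\log g)$, and the gain comes entirely from exploiting that a witnessing sub‑collection has the fixed small cardinality $3g-3$. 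A minor point to state carefully is the standard bijection between homeomorphism types of pants decompositions of $\Sigma_g$ and isomorphism classes of trivalent multigraphs, which is exactly what makes the "type $\mapsto$ subset" map injective.
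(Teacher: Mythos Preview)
Your proposal is correct and follows essentially the same argument as the paper: both count homeomorphism types of pants decompositions via the enumeration of cubic multigraphs on $2g-2$ vertices as $g^{(1+o(1))g}$, compare this to $\binom{|\Gamma|}{3g-3}$ via the injection ``type $\mapsto$ witnessing $(3g-3)$-subset'', and apply the standard upper bound $\binom{m}{k}\le (em/k)^k$ to extract $|\Gamma|=\Omega(g^{4/3-\varepsilon})$. Your write-up is in fact slightly more explicit about why the map from types to subsets is injective, which is implicit in the paper's one-line counting inequality.
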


\begin{proof}
The homeomorphism class of a pants decomposition is determined by the trivalent graph encoding the adjacencies of the pants. A pants decomposition has $2g-2$ curves, and there are, up to terms that are only exponential in $g$, $g^{g}$ such trivalent graphs on $2g-2$ vertices, and thus that many pants decompositions (see for example~\cite[Lemma~1]{guth2011pants}). Therefore, since any pants decomposition consists of $3g-3$ curves, any family of simple closed curves $\Gamma$ realizing them all must satisfy the counting lower bound given by $\binom{|\Gamma|}{3g-3}\gtrsim g^g$, where we use the $\gtrsim$ notation to hide terms that are only exponential in $g$. Therefore,

\[\left(\frac{|\Gamma| e}{3g-3}\right)^{3g-3}\geq \binom{|\Gamma|}{3g-3}\gtrsim g^g\]
and thus $|\Gamma|=\Omega(g^{4/3-\varepsilon})$ for any $\varepsilon>0$.
\end{proof}

Lemmas \ref{lem:allpants} and \ref{lem:lowerboundallpants} prove Theorem~\ref{maintheorem}.

\section{Small genus cases and labelled punctures: a small improvement}\label{S:smallgenus}

In this last section, we showcase that the approach of cutting along $g$ curves to planarize and then using the bound of labelled sphere is wasteful, as one can do better in the small genus cases when $g=1$ and $g=2$. While the proofs are somewhat ad hoc and will not generalize, we believe that this provides an interesting hint that the bound given in Lemma~\ref{lem:allpants} is not optimal.

First, let us consider the question of realizing all types of pants decompositions on $\Sigma_{1,n-2}$ when the punctures are labelled. Note that by cutting along a non-separating curve in $\Sigma_{1,n-2}$, we obtain a sphere with $n$ punctures and therefore by \cref{lem:labelled sphere}, a family of size at most $3^{n-1}$ is enough to realize all types of pants decompositions in this case. We improve this bound in the following lemma.

\begin{lemma}\label{lem:labelled torus}
There exists a family of simple closed curves of size less than $3^{n-2}$ that realizes all types of pants decompositions of $\Sigma_{1,n-2}$ up to labelled homeomorphisms.
\end{lemma}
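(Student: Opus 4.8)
The plan is to mimic the strategy of Lemma~\ref{lem:labelled sphere}, but to exploit the genus-one handle to "save" one puncture's worth of choices. First I would fix a non-separating simple closed curve $\delta$ in $\Sigma_{1,n-2}$; cutting along $\delta$ produces a sphere with $n$ punctures, two of which (call them $p$ and $q$) are the two copies of $\delta$. Any pants decomposition $P$ of $\Sigma_{1,n-2}$ either (a) can be isotoped to contain a non-separating curve, which we may take to be $\delta$, in which case $P$ becomes a pants decomposition of $S_n$ with the extra constraint that the punctures $p,q$ must be glued back, or (b) contains a separating curve $\delta'$ cutting off the handle; cutting along $\delta'$ leaves a one-holed torus (with a single pants decomposition, realizable by two fixed curves) and a sphere with $n-1$ punctures. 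Case (b) needs only the curves from Lemma~\ref{lem:labelled sphere} applied to $S_{n-1}$, which is already well within budget, so the crux is case (a).

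In case (a), the key observation is that after cutting along $\delta$, the two distinguished punctures $p$ and $q$ can be placed \emph{at the two ends of the line of punctures} — say at positions $1$ and $n$ — rather than at arbitrary positions. The wiggle curves $\gamma_S^f$ of Lemma~\ref{lem:labelled sphere} only need an "above/below" choice $f$ for punctures strictly between $\min S$ and $\max S$; by a relabelling that always puts $p=1$ and $q=n$, any curve in the relevant trees separates a set $S$ with either $1\notin S$ or $n\notin S$ (a curve can't separate the one-holed-torus side from both copies of $\delta$ at once without being parallel to $\delta$, which we handle separately), so effectively one of the $n$ positions is always "free" — it is never interior to the separated set in the sense that requires a choice. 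Re-running the counting from the proof of Lemma~\ref{lem:labelled sphere} with $n-1$ positions carrying genuine above/below freedom instead of $n$ yields a bound of roughly $\frac14(3^{n-1}-\cdots)<3^{n-2}$, together with the $O(3^{n-2})$ curves from case (b) and the two fixed curves for the handle; summing these stays below $3^{n-2}$.

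The main obstacle I expect is making precise the claim that one puncture-position is always free, and in particular handling curves in the tree that "wrap around" the handle, i.e.\ the image of $\delta$ itself and curves parallel to it. In the dual-tree language, the edge corresponding to $\delta$ separates a set $S$ that, after cutting, becomes two singletons $\{p\}$ and $\{q\}$ — this is a degenerate case that doesn't fit the $\Gamma_S$ framework directly and must be accounted for by the two explicit "handle curves." One must also check that the induction in the claim of Lemma~\ref{lem:labelled sphere} can be run while keeping $p$ and $q$ at the extremes throughout — this requires that whenever we recurse into a subtree, the distinguished punctures either both stay in that subtree (then they remain extremal there by the same convention) or both leave it (then the subtree is an ordinary $S_m$ and Lemma~\ref{lem:labelled sphere} applies as a black box). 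Verifying that these two cases are exhaustive, and that the arithmetic of the three resulting contributions genuinely sums to less than $3^{n-2}$, is the part that will require care, but it is a bookkeeping matter rather than a conceptual one.
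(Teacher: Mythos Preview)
Your starting move (fix a non-separating curve $\delta$ and cut) is also what the paper does, but several of the steps after that fail.

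First, the case split is spurious. Every pants decomposition of $\Sigma_{1,n-2}$ contains a non-separating curve, because its dual graph always has a cycle; so case (a) applies universally and case (b) is never an alternative to it. This matters for your arithmetic: case (a) contributes about $\tfrac14\cdot 3^{n-1}=\tfrac34\cdot 3^{n-2}$ curves, and case (b) (Lemma~\ref{lem:labelled sphere} on $S_{n-1}$) contributes another $\tfrac14\cdot 3^{n-1}=\tfrac34\cdot 3^{n-2}$; their sum is about $\tfrac32\cdot 3^{n-2}$, which does \emph{not} stay below $3^{n-2}$.

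Second, your recursion claim is false. You assert that when recursing into a subtree, $p$ and $q$ either both stay or both leave. But in the dual tree of the cut sphere $S_n$, the leaves $p$ and $q$ can perfectly well land in \emph{different} branches at the root, so recursing into one of those branches carries exactly one of them. That third possibility is not a bookkeeping detail; it breaks the ``one position is always free'' argument as you have stated it. (A way to repair this is to root the tree at the internal vertex adjacent to the leaf $p$; then $p$ lies on the root side of every internal edge and one genuinely never needs an above/below choice at that position. You do not say this, and it is not what you sketch.)

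The paper's proof sidesteps both issues by organising the curves differently. It works directly on $\Sigma_{1,n-2}$ and keeps the handle \emph{outside} the line of punctures rather than placing the two cut-punctures at positions $1$ and $n$. The separating curves come from a family $\Gamma_S$ indexed by $S\subset[n-2]$ with above/below choices only over the $n-2$ labelled punctures, so that part costs $\tfrac14(3^{n-2}-2n+3)$ --- saving \emph{two} positions, not one. The non-separating curves (the cycle edges of the dual graph) are handled by a separate family $\Theta_S$: for each $S\subset[n-2]$ one takes the single ``all-above'' curve in $\Gamma_S$ and concatenates it with the fixed handle curve $\theta$. The induction is run so that every cycle edge is assigned a curve going above all remaining punctures, which is exactly what makes the concatenation with $\theta$ preserve pairwise disjointness. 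Since $\Theta_S$ contributes only one curve per subset, this adds $2^{n-2}$, and the total $2^{n-2}+\tfrac14(3^{n-2}-2n+3)$ is indeed below $3^{n-2}$.
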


\begin{proof}
The dual graph to a pants decomposition of $\Sigma_{1,n-2}$, is a graph that has only one cycle and in which all vertices have degree three, except $n-2$ vertices of degree one, which in our case, correspond to the labelled punctures and are labelled. Recall that we call an edge that is not adjacent to a labelled vertex an internal edge. Denote the set of all such graphs by $\G$. Note that the edges that belong to the cycle in $G$ are non-separating and correspond to curves that are non-separating, but any two such edges together separate $G$ into two sub-trees and therefore correspond to curves that are separating together. Recall that two free homotopy classes are disjoint if there exists a curve in each of them such that these curves do not intersect.

Let $\theta$ be a simple closed curve that is non-separating on $\Sigma_{1,n-2}$, as in Figure~\ref{Fig:nonseparating}. Denote by $\Sigma'$ the surface obtained by cutting along $\theta$: this is a sphere with $n$ punctures, among which $n-2$ are our labelled punctures. As in the proof of Lemma~\ref{lem:labelled sphere}, we think of these $n$ punctures as being lined up, and we consider, for each $S$ a subset of the labelled punctures and for each map $f:[n-2]\setminus S \rightarrow \{above,below\}$, a curve $\gamma^f_{S}$ which encompasses the punctures in $S$ and goes above or below the other punctures as specified by $S$. Gluing back the surface $\Sigma'$ along $\theta$, these curves together define a family of free homotopy classes $\Gamma_S$ on $\Sigma_{1,n-2}$.

Note that by construction, $\theta$ is disjoint from $\gamma_S^f$ for any $S\subset[n-2]$ and choice of $f$. Now, for each curve $\gamma_S^f$ where $f$ always maps to $above$, we want to define an alternate curve that additionally goes around the handle following $\theta$. Formally: using a path going above all of the punctures, we base all these curves at a point $b$ located somewhere on $\theta$ and consider the curves obtained by concatenating the curves $\gamma^f_S$ and $\theta$. This yields a second family of free homotopy classes which we denote by $\Theta_S$. We refer to Figure~\ref{Fig:theta} for the depiction of a curve $\gamma_S^f$ in red and its corresponding curve in $\Theta_S$ in blue.

\begin{figure}[t]
    \centering
    \includegraphics[width=.75\textwidth]{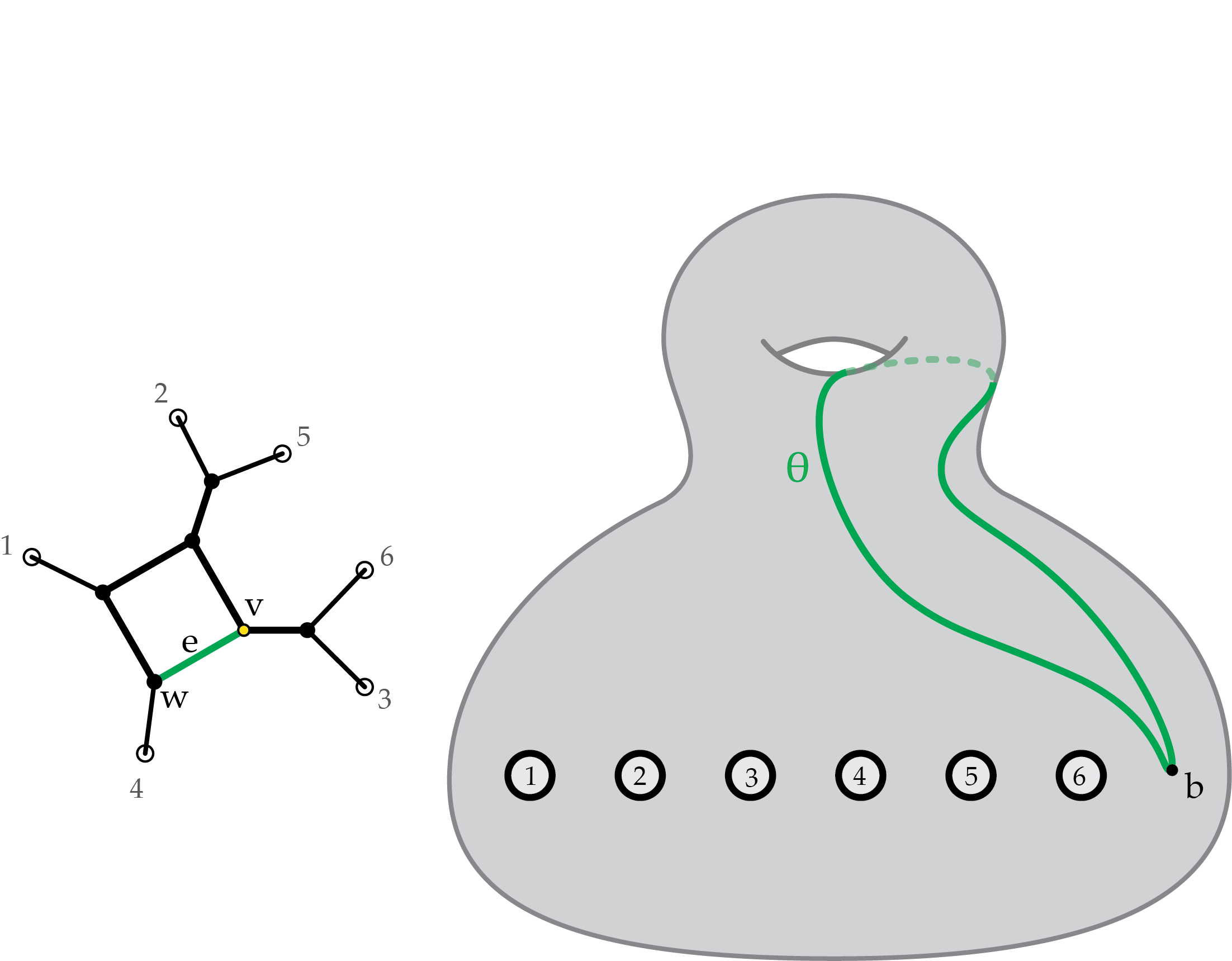}
    \caption{We realize the edge $e$ by the green curve ($\theta$) depicted at right.}
    \label{Fig:nonseparating}
\end{figure}

\begin{figure}[H]
    \centering
    \includegraphics[width=.4\textwidth]{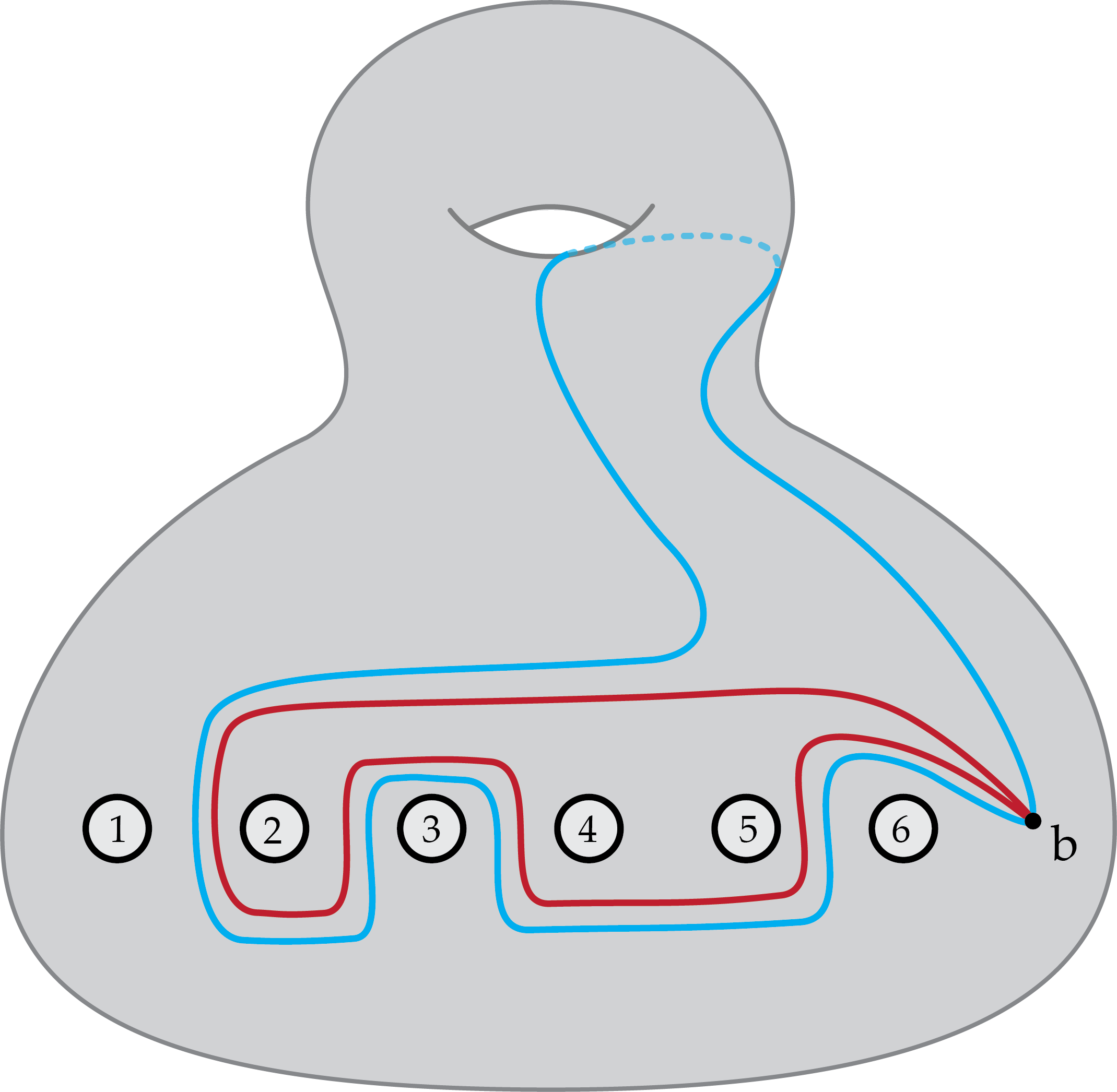}
    \caption{The red curve separates the surface to a sphere that has  $\{2,4,5\}$ as punctures and a genus one surface with $\{1,6\}$ as punctures; this curve belongs to $\Gamma_{\{2,4,5\}}$. The blue curve is non-separating and is the concatenation of the red curve with $\theta$, this curve belongs to $\Theta_{\{2,4,5\}}$.}
    \label{Fig:theta}
\end{figure}

We say that a set $\Phi_G$ of free homotopy classes of curves recognizes a graph $G\in \G$ if it satisfies the following properties.
\begin{itemize}
    \item The homotopy classes in $\Phi_G$ are pairwise disjoint.
\item For each internal edge $e$ in $G$ that is separating a tree that contains the subset $S$ of the labelled vertices, there exists a homotopy class of curves in $\Phi_G$ that cuts the surface into a sphere containing the punctures in $S$ and a genus one surface containing the punctures in $[n-2]\setminus S$.
\item Let $e$ and $e'$ be a pair of edges that separate a tree from $G$ containing the labelled vertices in $S$. 
Then there exist two homotopy classes of non-separating curves in $\Phi_G$ corresponding to $e$ and $e'$ that together separate the surface into a sphere that contains the punctures in $S$ and a sphere containing the punctures in $[n-2]\setminus S$.
\end{itemize}   

In order to realize the pants decomposition corresponding to $G$, it is enough to choose a curve from each homotopy class of curves in $\Phi_G$ such that these curves are pairwise disjoint.

Let $\Lambda=\bigcup_{S\subset [n-2]} (\Gamma_S\cup \Theta_S)$. 
As in the proof of \cref{lem:labelled sphere}, we first prove that the set of homotopy classes in $\Lambda$ are enough to recognize every graph in $\G$. 

 \textbf{Claim.} The set of homotopy classes $\Lambda$ recognizes all graphs in $\G$.
 
For every graph $G\in\G$, we proceed as follows. We fix a vertex $v$ in the cycle and let $e=vw$ be one of its adjacent edges that belongs to the cycle.
The edge $e$ is non-separating and we realize this edge for every graph $G\in\G$ by the curve $\Theta_\emptyset=\theta$, see Figure~\ref{Fig:nonseparating}.

Let $T$ be the tree that we obtain by removing $e$ from $G$ and let $v$ be the root of $T$. 
Note that $T$ is almost trivalent; only the vertices incident to $e$ ($v$ and $w$) are of degree two. We recognize the edges in $T$ by the homotopy classes in $\bigcup_{S\subset [n-2]} \Gamma_S$ as in the proof of \cref{lem:labelled sphere} and with the following additional considerations:
\begin{itemize}
\item In the proof of \cref{lem:labelled sphere}, during the induction step where one removes a vertex to obtain three subtrees, an arbitrary choice was made to select that the curves in one subtree were going above the other punctures, the curve in another subtree were going below and the curves in the last subtree were between. Here, this choice is no longer arbitrary, and we enforce the fact that for each edge in the cycle of $G$ except $e$, the choice is made so that the corresponding curve goes \emph{above} the remaining punctures. This is pictured in Figure~\ref{Fig:genus1}.
\item If $w$ is adjacent to a labeled vertex $i$, the edge $iw$ is not internal and does not need to be recognized. We recognize the other edge adjacent to $w$ by a homotopy class in $\Gamma_{\{i\}}$ (note that in the proof of \cref{lem:labelled sphere} we did not need the homotopy classes of curves that separated a single puncture). Otherwise both edges adjacent to $w$ are recognized with the same homotopy class as the case where these two edges are replaced by one edge. 
\end{itemize}

Now, denote by $\phi_r$ the homotopy class of curves associated to the edge $r$ in $T$. We recognize the edges in $G$ as follows. If $r$ is an edge in $G$ that does not belong to the cycle in $G$, we recognize it by $\phi_r$; otherwise, we recognize it by the corresponding homotopy class in $\Theta_S$ that is the concatenation of $\phi_r$ and $\theta$. Denote these curves together with $\Theta_\emptyset=\theta$ by $\Phi_G$. We still need to prove that these homotopy classes are disjoint and that they indeed recognize the edges in $G$.

By construction, the curves $\phi_r$ are pairwise disjoint, and so are their counterparts in $\Theta_S$. The remaining disjointnesses follows from the first additional consideration above: the curves in $\phi_r$ when $r$ belongs to the cycle all go above the punctures that they did not encompass, and thus they can be safely concatenated with $\theta$ while still being disjoint from all the other curves in $\Phi_G$.

Finally, recognizing the edges not in the cycle follows the same argument as in the case without genus. For pairs of edges in the cycle, the union of the corresponding pair of curves forms a homology cycle that separates the corresponding punctures as desired. This finishes the proof of the claim. 
$\hfill\blacksquare$

To conclude, it now suffices to pick a curve in each homotopy class of $\Lambda$, such that they are altogether in minimal position. 
For each $G$, the curves originating from $\Phi_G$ are pairwise disjoint and therefore realize the pants decomposition corresponding to $G$. Therefore, the total family of curves realizes all types of pants decompositions.

\begin{figure}[t]
    \centering
    \includegraphics[width=.8\textwidth]{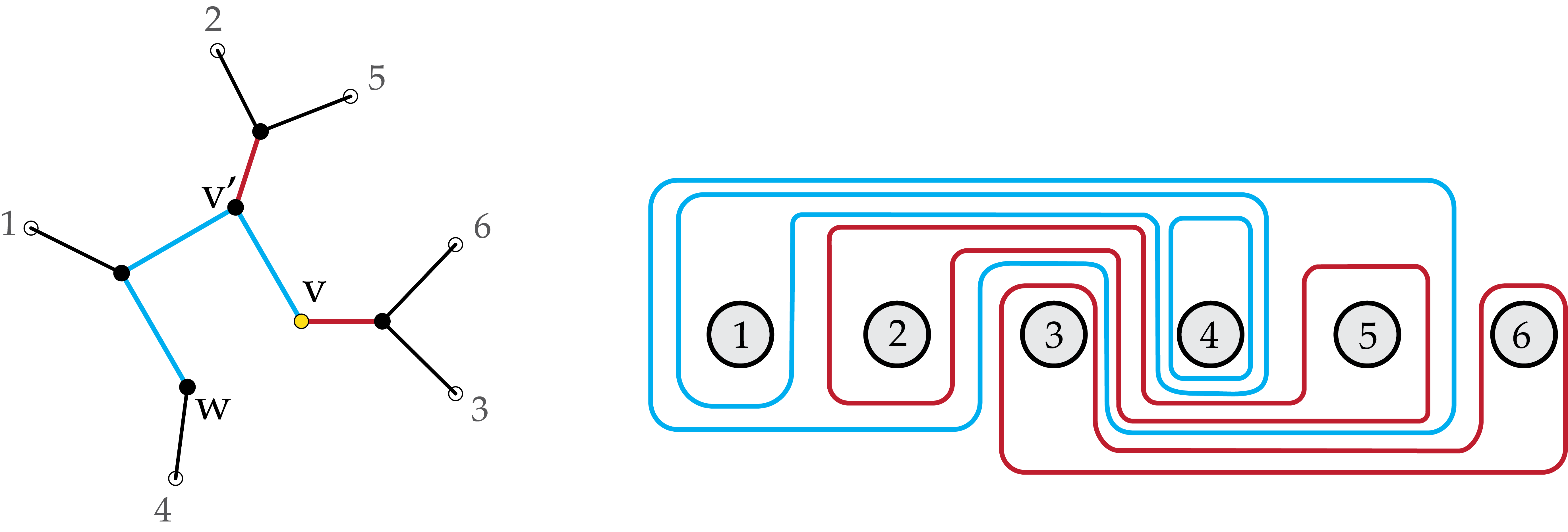}
    \caption{The edges in the cycle of the graph are recognized by curves that go above the remaining punctures in (the planarization of) $\Sigma$.}
    \label{Fig:genus1}
\end{figure}

\begin{figure}[htp]
    \centering
    \includegraphics[width=.85\textwidth]{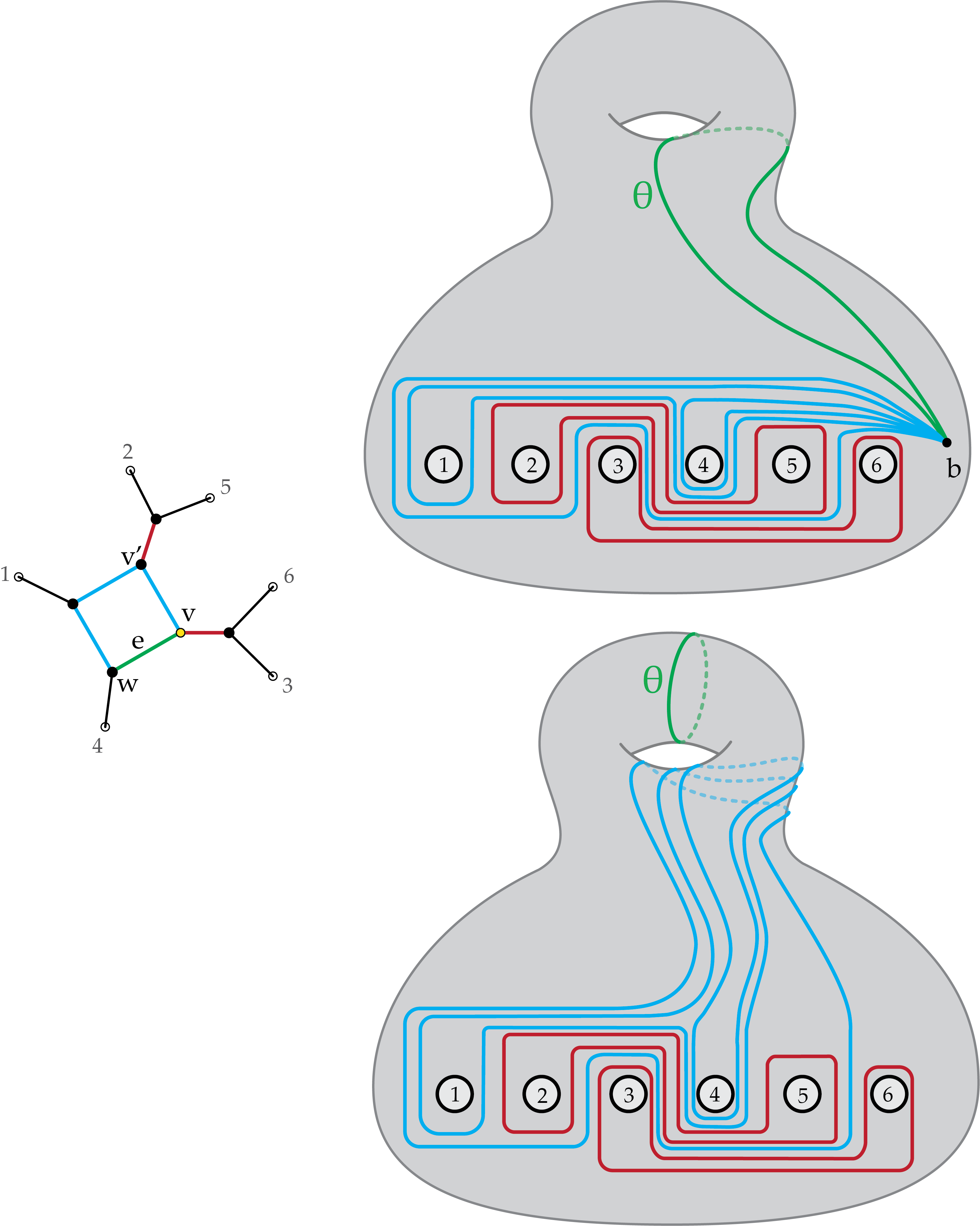}
    \caption{Realizing $G$ (at left) from a realization for $T$. The blue curves correspond to the edges that belong to the cycle in $G$ and the red ones correspond to those that do not. The homotopy classes of the curves in the right bottom drawing recognizes the graph $G$.}
    \label{Fig:concatenation}
\end{figure}

Finally, we provide an upper bound the size of the family $\Lambda$. The family $\Theta_S$ consists of only one curve for each choice of $S$, and thus, reusing the bound from Lemma~\ref{lem:labelled sphere}, we obtain
\[|\Lambda|=\Sum_{S\subset [n]}(|\Gamma_S|+ |\Theta_S|)=2^{n-2}+\frac{1}{4}(3^{n-2}-2n+3)<3^{n-2}.\]
\end{proof}

We next look at the case of genus $2$ with $n-4$ punctures, where we can do even better than in the previous case.

\begin{lemma}
There exists a family of simple closed curves of size at most $3^{n-3}$ that realizes all types of pants decompositions of $\Sigma_{2,n-4}$ up to labelled homeomorphisms.
\end{lemma}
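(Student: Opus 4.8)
The plan is to mimic the structure of the proof of Lemma~\ref{lem:labelled torus}, but to start by cutting the genus-$2$ surface along \emph{two} disjoint non-separating curves $\theta_1,\theta_2$ that together do not disconnect $\Sigma_{2,n-4}$, yielding a sphere $\Sigma'$ with $n$ punctures, $n-4$ of which are labelled and the remaining four coming in two matched pairs, one pair for each $\theta_i$. As before we line up these $n$ punctures in the plane and consider, for each subset $S$ of the $n-4$ labelled punctures and each $f:[n-4]\setminus S\to\{above,below\}$, the standard curve $\gamma_S^f$ encompassing $S$. After regluing along $\theta_1$ and $\theta_2$, these become free homotopy classes $\Gamma_S$ on $\Sigma_{2,n-4}$. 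In addition, for the curves $\gamma_S^f$ that go \emph{above} all punctures not in $S$, we form the concatenations with $\theta_1$, with $\theta_2$, and with $\theta_1\theta_2$ (going around one handle, the other handle, or both), giving three further one-element families $\Theta_S^1,\Theta_S^2,\Theta_S^{12}$ for each $S$; we also keep $\theta_1$, $\theta_2$ and a curve realizing the "genus-$2$ separating" curve $\theta_1\theta_2$ type, i.e.\ the $S=\emptyset$ cases. The set $\Lambda$ is the union of all $\Gamma_S$ and all $\Theta_S^\bullet$.

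Next I would set up the appropriate notion of a set $\Phi_G$ \emph{recognizing} a dual graph $G$ of a pants decomposition of $\Sigma_{2,n-4}$. Such a $G$ is a trivalent graph (with $n-4$ labelled degree-one vertices) whose first Betti number is $2$; its cycle space has a basis of two cycles, and cutting all edges of a spanning "theta-subgraph" — concretely, picking two independent cycles sharing at most a path — reduces $G$ to a tree $T$ with three marked degree-two vertices. The recognition conditions are the natural analogues of those in Lemma~\ref{lem:labelled torus}: pairwise disjointness; each internal separating edge is matched by a homotopy class cutting off the right punctures with the right genus split (a sphere-with-$S$ versus a genus-$2$ piece, or a genus-$1$/genus-$1$ split, depending on which "side" of the handles the edge lies); and each pair (resp.\ triple) of non-separating edges whose union separates is matched by curves in the $\Theta$ families whose concatenation with the appropriate $\theta_i$'s bounds correctly. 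The key structural point, exactly as in the genus-$1$ argument, is that when we run the planar recognition procedure of Lemma~\ref{lem:labelled sphere} on the tree $T$, the arbitrary "above/below/between" choice at each vertex can be forced so that every edge lying on a chosen cycle of $G$ is recognized by a curve going \emph{above} all the punctures it does not encompass; this is what makes the later concatenations with $\theta_1,\theta_2$ pairwise disjoint.

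The claim that $\Lambda$ recognizes every such $G$ would then be proved by the same induction on the number of punctures as in Lemma~\ref{lem:labelled sphere}, fixing a root on (a cycle of) $G$, realizing the two "extra" edges needed to kill the genus by $\theta_1$ and $\theta_2$ (together with the genus-$2$ separating class for the pair), recursing on the subtrees, and then lifting back: an edge of $G$ not on any chosen cycle keeps its planar curve $\phi_r$, while an edge on the first (resp.\ second, resp.\ both) cycle is recognized by the concatenation of $\phi_r$ with $\theta_1$ (resp.\ $\theta_2$, resp.\ $\theta_1\theta_2$). Disjointness follows from the forced "above" choices, and correctness of the homology/genus splits is a cut-and-paste check. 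Finally, since each $\Theta_S^\bullet$ contributes a bounded number of curves per $S$, we get, reusing the count $|\bigcup_S\Gamma_S|=\tfrac14(3^{n-3}-2(n-1)-1)$ from Lemma~\ref{lem:labelled sphere} applied to $n-1$ lined-up punctures and adding $O(2^{n-4})$ for the $\Theta$ families, a total bounded by $3^{n-3}$ for $n$ large, and the small cases are checked directly.

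The main obstacle I expect is the bookkeeping in the recognition conditions: with two handles there are several genus-types a separating curve can have (sphere vs.\ genus $2$, genus $1$ vs.\ genus $1$) and several ways a family of non-separating edges can jointly separate, and one has to make sure that the forced "above" choice on \emph{all} cycle-edges simultaneously is consistent — i.e.\ that two different cycles of $G$ sharing an edge do not impose conflicting planarization choices. This is the step where the argument is genuinely "ad hoc" and does not obviously generalize to higher genus; I would handle it by choosing the two cycles to share a common path (a theta-subgraph), so that "above for every cycle-edge" is a single coherent instruction, and then verify disjointness edge by edge as in Figure~\ref{Fig:concatenation}.
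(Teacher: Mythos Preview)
Your overall architecture is close to the paper's, but there are two genuine gaps.

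First, your family $\Lambda$ contains no curve realizing a genus-$1$/genus-$1$ split. With $\theta_1,\theta_2$ chosen disjoint, non-separating, and together non-separating, the concatenation $\theta_1\theta_2$ is homologically $[\theta_1]+[\theta_2]\neq 0$, hence still non-separating; so $\Theta_S^{12}$ consists of non-separating curves, not the separating ones you need for the edges on the path joining two disjoint cycles of $G$. The paper handles this by introducing an additional \emph{separating} curve $\omega$ (bounding one handle) and a family $\Omega_S$ of concatenations with $\omega$; something of this kind is unavoidable, and it is simply absent from your $\Lambda$.

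Second, your plan to force ``above'' on \emph{all} cycle-edges cannot work in the theta-graph case. After removing the special vertex $w$, the root $v$ has up to three branches, and the three top-level curves encompassing $S_1,S_2,S_3$ cannot all go ``above'' the remaining punctures and still be pairwise disjoint in the plane --- that is precisely why the planar argument of Lemma~\ref{lem:labelled sphere} uses above/below/between. The paper therefore assigns above to branch~1, below to branch~2, and ``between'' to branch~3, which forces the third family $\Theta^3_S$ (your $\Theta^{12}_S$) to carry the full $2^{\,\cdot}$ worth of above/below choices, not a single curve per $S$. This also affects the count: one gets two full $\Gamma$-style contributions ($\Gamma_S$ and $\Theta^3_S$), each of size $\tfrac14(3^{n-4}-2(n-4)-1)$, plus three size-$2^{n-4}$ families ($\Theta^1_S$ above, $\Theta^2_S$ below, $\Omega_S$ above), totalling below $3^{n-3}$. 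Your count with ``$n-1$'' is off; the relevant parameter is $n-4$ labelled punctures.

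In short: add a separating $\omega$ and its family $\Omega_S$, split the analysis into the two cases (disjoint cycles versus theta-graph), and allow full above/below flexibility in the third $\Theta$-family. With those fixes you recover exactly the paper's proof.
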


\begin{proof}
The dual graph to a pants decomposition of $\Sigma_{2,n-4}$ is a graph with cyclomatic number two (i.e., there are two edges so that removing them yields a tree) in which all vertices have degree three except $n-4$ vertices of degree one; these vertices correspond to the labelled punctures and are labelled. Let us denote all such graphs with $\G$. These graphs have either two disjoint cycles or two cycles that share at least one or more edges, see Figure~\ref{Fig:duals}. Note that in this case the edges that belong to the cycles are non-separating and therefore correspond to non-separating closed curves on the surface. Also, if the cycles are disjoint, the edges that belong to the path connecting the cycles are separating the graph into two sub-graphs with one cycle each and therefore correspond to separating closed curves that cut the surface into two genus one surfaces.

\begin{figure}[H]
    \centering
    \includegraphics[width=.55\textwidth]{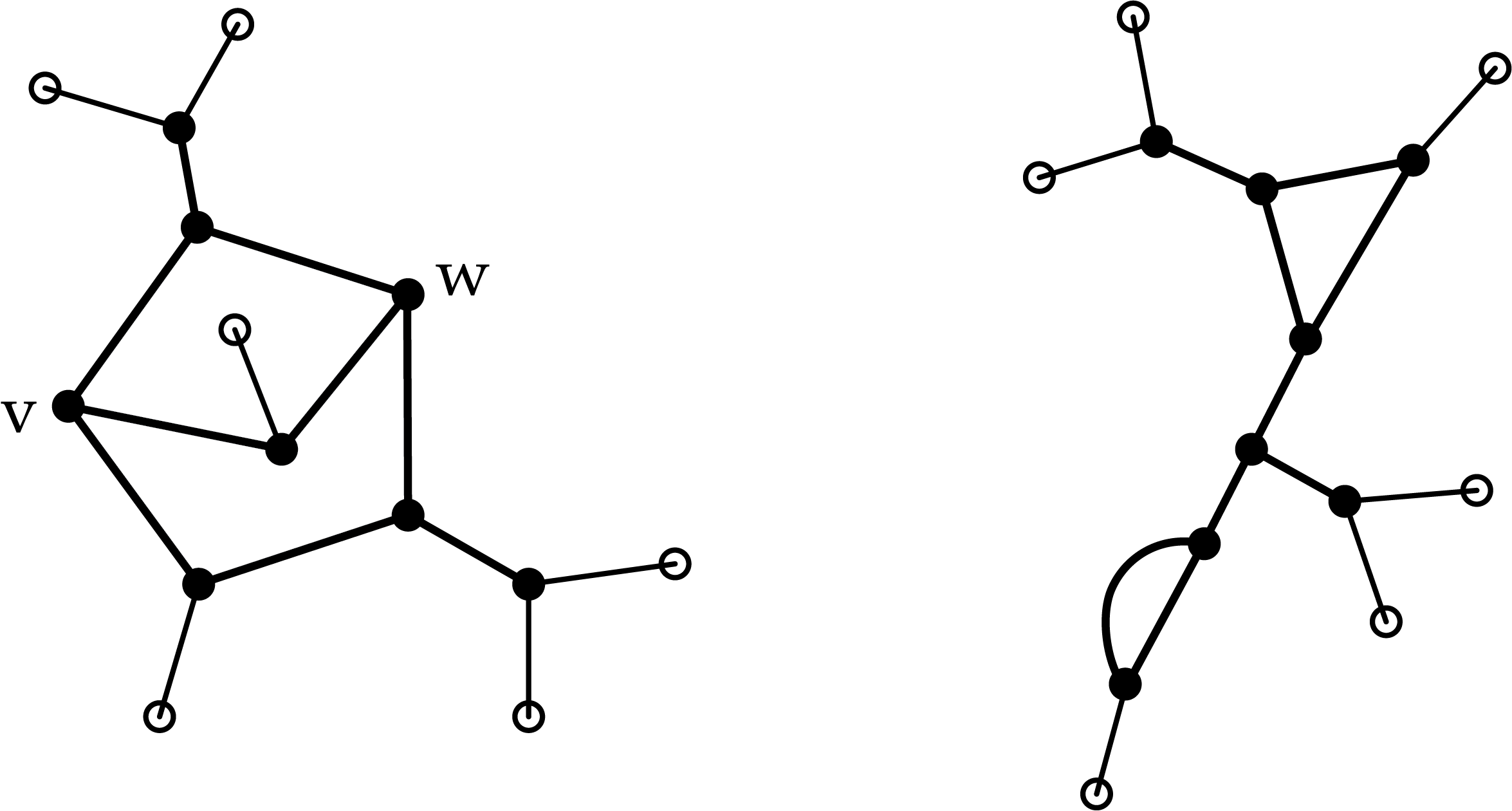}
    \caption{Left: the cycles are not disjoint; we can see that there are two vertices with all adjacent edges non-separating. Right: two disjoint cycles.}
    \label{Fig:duals}
\end{figure}
Fix a base point $b$ which we intuitively consider next to the puncture $n-4$, as depicted in Figure~\ref{Fig:omega}. We consider a family of simple closed curves $\theta_1,\theta_2,\theta_3,\psi, \omega$ based at $b$ as pictured in Figure~\ref{Fig:omega}: the curves $\theta_i$ are non-separating and pairwise disjoint, the curve $\psi$ is also non-separating and only crosses $\theta_1$ once, and the curve $\omega$ is separating the handle formed by $\theta_1$ and $\psi$ from the rest of the surface.

 Denote by $\Sigma'$ the surface we obtain by cutting along a curve in $\theta_1$ and a curve in $\theta_2$. The surface $\Sigma'$ is a sphere with $n$ punctures among which $n-4$ are labelled. For a subset $S$ of the labelled  punctures, let $\Gamma'_S$ be the homotopy classes of separating curves that encompass the punctures in $S$ and remain either above or below the rest of the punctures as defined in the proof of \cref{lem:labelled sphere}. Let $\Gamma_S$ be the homotopy classes of closed curves on $\Sigma_{2,n-4}$ that we obtain from $\Gamma'_S$ by gluing back the surface along $\theta_1$ and $\theta_2$. Note that by construction, $\theta_i$ for $1\leq i\leq 3$ and $\omega$ are disjoint from $\Gamma_S$ for $S\subset[n-4]$.

Define $\Omega_S$ to be the concatenation of curves in $\Gamma_S$ with $\omega$; these curves are separating. Finally, define $\Theta^i_S$ to be the concatenation of curves in $\Gamma_S$ with the $\theta_i$ for $i=1,2,3$; these curves are not separating. 

\begin{figure}[H]
    \centering
    \includegraphics[width=.95\textwidth]{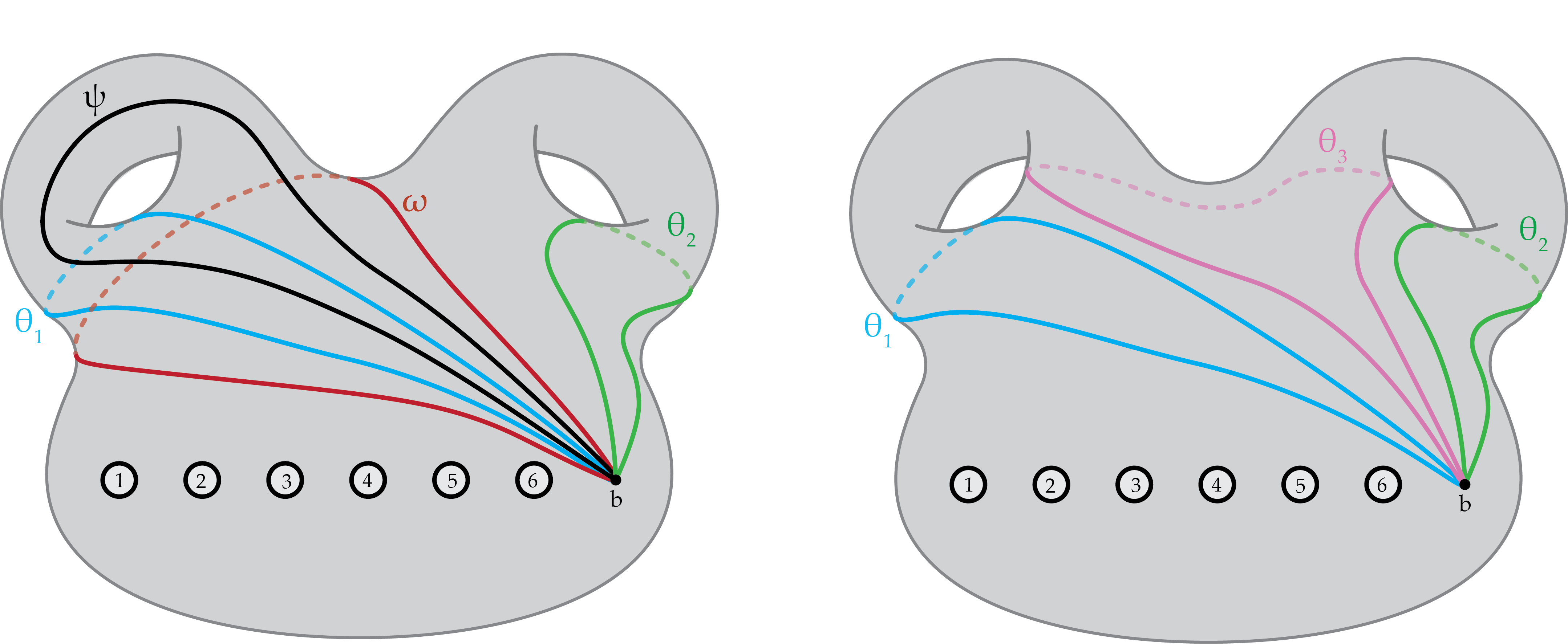}
    \caption{Homotopy classes of the non-separating curves $\theta_1,\theta_2,\theta_3$ and $\psi$ and the separating curve $\omega$.}
    \label{Fig:omega}
\end{figure}

In the proof of Lemma~\ref{lem:labelled torus}, we introduced a notion for a set of curves recognizing a graph. Here we provide a more intricate variant of this notion in the genus $2$ setting. We say that a set $\Phi_G$ of based homotopy classes of curves recognizes a graph $G\in \G$, if it satisfies the following properties.
\begin{itemize}
    \item The free homotopy classes in $\Phi_G$ are pairwise disjoint.
\item For each internal edge $e$ in $G$ that is separating a tree that contains the subset $S$ of the labelled vertices, there exists a homotopy class of curves in $\Phi_G$ that cuts the surface into a sphere containing the punctures in $S$ and a genus two surface containing the punctures in $[n-4]\setminus S$.
\item Let $e$ and $e'$ be a pair of edges that separate a tree from $G$ containing the labelled vertices in $S$. 
Then there exist two homotopy classes of non-separating curves in $\Phi_G$ corresponding to $e$ and $e'$ that together separate the surface into a sphere that contains the punctures in $S$ and a genus one surface containing the punctures in $[n-4]\setminus S$.
\item If an edge separates the graph into two sub-graphs with one cycle such that one contains the vertices in $S$ and the other contains the vertices in $[n-4]\setminus S$, then there exists a homotopy class in $\Phi_G$ that separates the surface into two surfaces of genus one such that one contains the punctures in $S$ and the other contains the punctures in $[n-4]\setminus S$.
\end{itemize}

In order to realize the pants decomposition corresponding to $G$, it is enough to choose a curve from each homotopy class of curves in $\Phi_G$ such that these curves are pairwise disjoint.

Let $\Lambda= \bigcup_{S\subset [n-4]} (\Gamma_S\cup \Theta^1_S\cup\Theta^2_S\cup\Theta^3_S\cup \Omega_S)$. We first prove that the homotopy classes in $\Lambda$ are enough to recognize all graphs in $\G$. Then we show that we can choose a curve from each homotopy class in $\Lambda$ such that these curves realize all types of pants decompositions of $\Sigma_{2,n-4}$.

 \textbf{Claim.} The set of homotopy classes $\Lambda$ is enough to recognize all graphs in $\G$.
 
We consider two different cases for the proof of this claim.

\textbf{The case where the cycles in $G$ are not disjoint.}
In this case, there exist exactly two vertices $v$ and $w$ such that all their adjacent edges are non-separating and belong to the cycles, see left picture in Figure~\ref{Fig:duals}. By removing one of these vertices from $G$, let us say $w$, we obtain a tree; denote this tree by $T$. Consider the vertex $v$ to be the root of $T$ and proceed as in \cref{lem:labelled sphere} to recognize the edges of $T$ with homotopy classes in $\bigcup_{S\subset [n-4]} \Gamma_S$. Number the branches of $T$ at $v$ by 1,2 and 3 (there might only exist 2 branches or even one). As in \cref{lem:labelled torus}, we have the power to choose the homotopy classes corresponding to the edges that belong to the cycles and are in branch 1 to go above everything else. Likewise, we ensure that the cycles corresponding to branch $2$ go below everything else, and finally that the edges corresponding to branch $3$ go inbetween the cycles of branches $1$ and $2$. Let $\Phi_T$ be the set of homotopy classes of curves that recognize $T$. We denote the curves in $\Phi_T$ that recognize the edge $r$ in $T$ by $\phi_r$.

In order to recognize $G$, we proceed as follows. If $r$ is an edge in $G$ that does not belong to any cycle in $G$, we recognize it by $\phi_r$; otherwise we recognize it by a homotopy class of curves that is the concatenation of $\phi_r$ with $\theta_i$ if it belongs to the branch $i$ in $T$ for $1\leq i\leq 3$. We recognize the edges adjacent to $w$ by the curves $\Theta^1_\emptyset=\theta_1, \Theta^2_\emptyset=\theta_2$ and $\Theta^3_\emptyset=\theta_3$. Denote these homotopy classes of curves by $\Phi_G$. Now, these free homotopy classes can be realized by disjoint curves as the choices of above/below in the three branches have been specifically designed so that concatenating those curves with $\theta_1, \theta_2$ and $\theta_3$ can still be done while preserving disjointedness: see Figure~\ref{Fig:p2}.

\begin{figure}[ht]
    \centering
    \includegraphics[width=.9\textwidth]{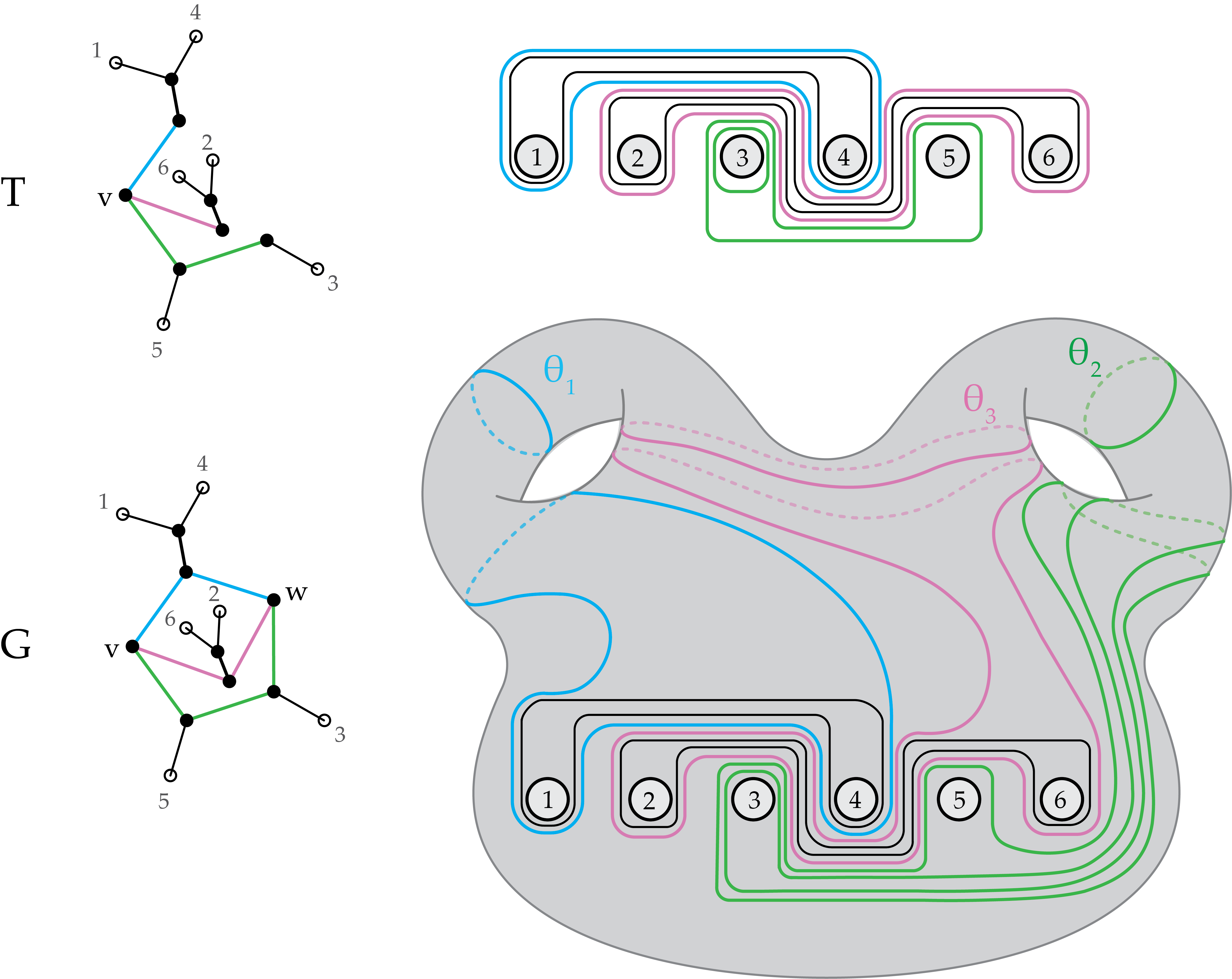}
    \caption{The case where the cycles in $G$ are not disjoint.}
    \label{Fig:p2}
\end{figure}
\begin{figure}[ht]
    \centering
    \includegraphics[width=.9\textwidth]{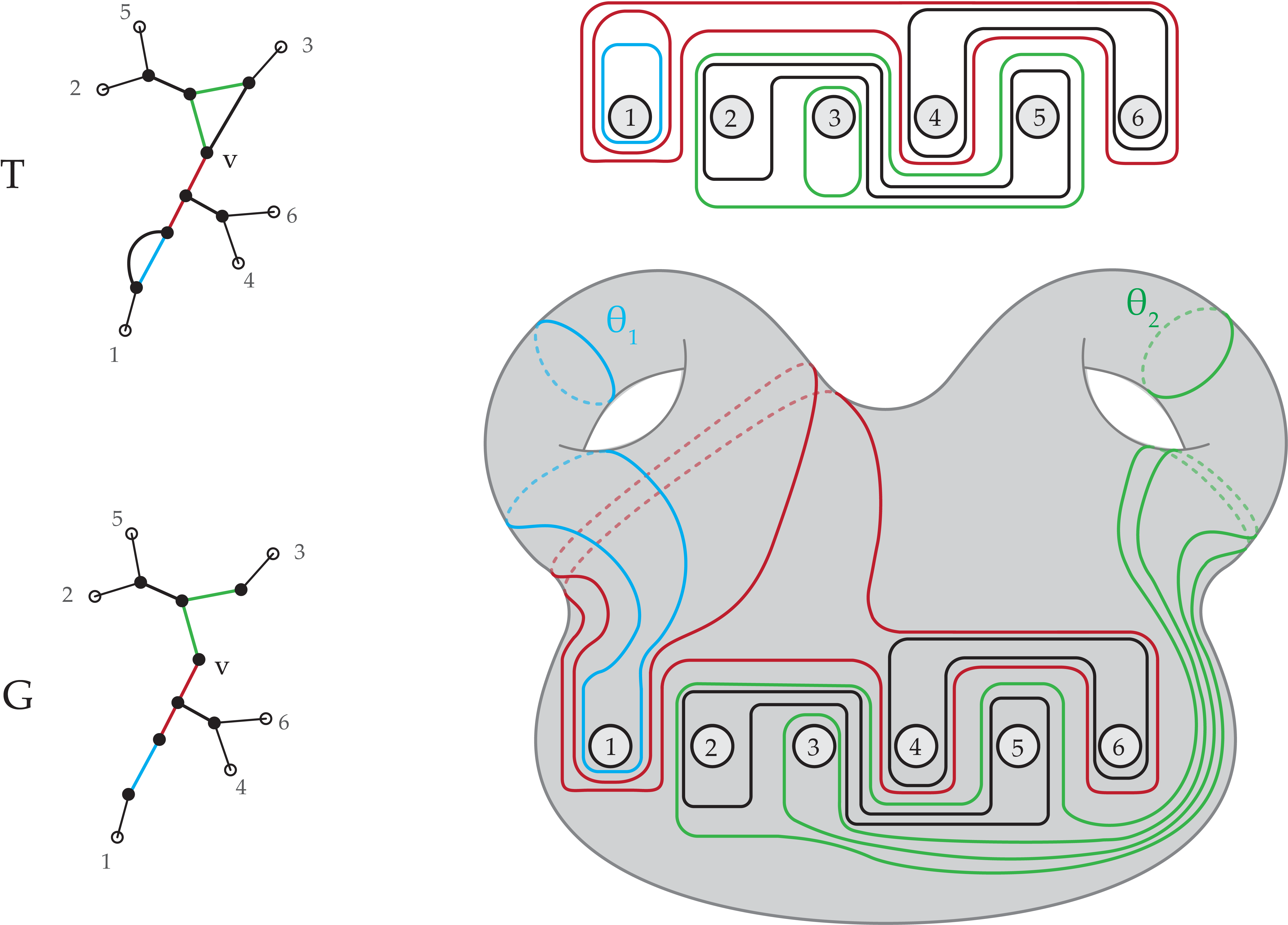}
    \caption{The case where the cycles in $G$ are disjoint.}
    \label{Fig:p3}
\end{figure}

\textbf{The case where $G$ has two disjoint cycles.}
Choose the vertex $v$ in a cycle of $G$ such that it belongs to the path that connects the two cycles. Remove an edge $r_1$ from the other cycle and an edge $r_2$ adjacent to $v$ from the cycle that $v$ belongs to. We recognize $r_1$ by $\theta_1$ and $r_2$ by $\theta_2$ in Figure~\ref{Fig:omega}. The graph $G\setminus \{r_1,r_2\}$ is a tree which we denote by $T$. Let $v$ be the root of $T$ and proceed as in \cref{lem:labelled sphere} to realize $T$. We choose the homotopy classes that recognize the edges in the path that connect the two cycles and the edges in the cycle that contains $r_1$ such that they go above the other punctures. 
For the edges in the cycle that contains $r-2$, we choose homotopy classes that go below the rest of the punctures.
Let $\Phi_T$ be the set of homotopy classes of curves that recognize $T$. We denote the homotopy class in $\Phi_T$ that recognizes the edge $r$ in $T$, by $\phi_r$.

In order to recognize $G$, we proceed as follows. If $r$ is an edge in $G$ that does not belong to any cycle in $G$ nor the path connecting the two cycles, we realize it by $\phi_r$. If $r$ belongs to the same cycle as $v$ in $G$, we concatenate $\phi_r$ with $\theta_2$ and if it belongs to the other cycle, we concatenate it with $\theta_1$. If $r$ belongs to the path connecting the two cycles, $r$ is recognized by the concatenation of $\phi_r$ with $\omega$. These curves together with $\Theta^1_\emptyset=\theta_1$ and $\Theta^2_\emptyset=\theta_2$ recognize $G$; we denote the set containing these curves by $\Phi_G$. As before, the choices of above and below in the homotopy classes have been specifically designed to ensure that the homotopy classes can be realized with disjoint curves, see Figure~\ref{Fig:p3}. 

This finishes the proof of the claim. $\hfill\blacksquare$

As in the proofs of the previous lemmas, it now suffices to pick a closed curve in each homotopy class and $\Lambda$ so that the resulting is in minimal position. By construction, for any type of pants decomposition with dual graph $G$, the curves in $\Phi_G$ are pairwise disjoint and realize this type of pants decomposition. Altogether, the curves that we chose realize all types of pants decompositions.

Finally, we provide an upper bound on the size of $\Lambda$. Note that in our construction, it suffices to consider curves in $\Theta^1_S$ and $\Omega_S$ which go above the punctures they do not encompass, and curves for $\Theta^2_S$ which go below the punctures they do not encompass. However, for $\Theta^3_S$, as for $\Gamma_S$, we need to consider all possible choices of above and below for the punctures they do not encompass. This yields the follow bound.

\begin{align*}&&|\Lambda|=& \  \Sum_{S\subset [n]}(|\Theta^1_S|+|\Theta^2_S|+|\Omega_S|)+\Sum_{S\subset [n]} (|\Gamma_S|+|\Theta^3_S|)\\&&=& \ 3 \times 2^{n-4}+2\times\frac{1}{4}(3^{n-4}-2(n-4)-1)\\&&<& \  3^{n-3}\end{align*}

\end{proof}

We do not push these studies farther: it would seem that the techniques used in this last proof would require considering curves using arbitrary subsets of the handles, and thus naturally lead to exponentially-sized family of curves. While the resulting base of the exponential could conceivably be smaller than the one we obtain in Lemma~\ref{lem:allpants}, it would therefore not be helpful towards breaking the exponential barrier.

{\bf Acknowledgements.} We are grateful to Bram Petri for pointing out a mistake in an earlier version of this paper.

\bibliographystyle{alpha}
\bibliography{biblio}
{\em Addresses:}\\
LIGM, CNRS, Univ. Gustave Eiffel, ESIEE Paris, F-77454 Marne-la-Vallée, France\\
{\em Emails:} \href{niloufar.fuladi@univ-eiffel.fr}{niloufar.fuladi@univ-eiffel.fr}, \href{mailto:arnaud.de-mesmay@univ-eiffel.fr}{arnaud.de-mesmay@univ-eiffel.fr}\\
Department of Mathematics, University of Luxembourg, Luxembourg \\
{\em Emails:} \href{mailto:hugo.parlier@unilu.ch}{hugo.parlier@uni.lu}\\
\end{document}